\theoremstyle{plain}
\newtheorem{theorem}{Theorem}[section]
\newtheorem{lemma}[theorem]{Lemma}
\newtheorem{result}[theorem]{Result}
\newtheorem{remark}[theorem]{Remark}
\newtheorem{example}[theorem]{Example}
\newcommand\PG{\mathsf{PG}}
\renewcommand\le{\leqslant}
\renewcommand\ge{\geqslant}
\newcommand{\erz}[1]{\langle #1\rangle}
\newcommand\F{\mathcal{F}}
\newcommand\calP{\mathcal{P}}
\newcommand\calQ{\mathcal{Q}}
\newcommand\calE{\mathcal{E}}
\newcommand\calH{\mathcal{H}}
\newcommand\calPdash{X}
\title{On intriguing sets of the Penttila-Williford association scheme}
\author{John Bamberg}
\address[Bamberg]{Centre for the Mathematics of Symmetry and Computation, Department of Mathematics and Statistics, The University of Western Australia, 35 Stirling Highway, Crawley, W. A. 6019, Australia.}
\author{Klaus Metsch}
\address[Metsch]{Justus-Liebig-Universit\"{a}t, Mathematisches Institut,
Arndtstra{\ss}e 2, D-35392 Gie{\ss}en, Germany.}
\date{}
\keywords{elliptic quadric, intriguing set, association scheme}
\subjclass[2000]{05E30, 51E12}
\begin{document}

\maketitle

\begin{abstract}
We investigate intriguing sets of an association scheme introduced by Penttila and Williford (2011) that was the basis for
their construction of primitive cometric association schemes that are not $P$-polynomial nor the dual of a $P$-polynomial scheme. In particular, we give examples and characterisation results for the four types of intriguing sets that arise in this scheme.
\end{abstract}

\section{Introduction}

A celebrated result of Beniamino Segre \cite[n. 91]{Segre:1965aa} is that if there exists a nonempty proper subset $S$ of the point set $\calP$ of the elliptic quadric $\mathsf{Q}^-(5,q)$, $q$ odd, such that every generator meets $S$ in a constant number $m$ of points, then $m=(q+1)/2$. Such a set $S$ is called a \emph{hemisystem}, and by the work of Cossidente and Penttila \cite{Cossidente:2005aa}, these configurations exist for every odd prime power $q$. Hemisystems garnered interest from the algebraic combinatorics community, as they give rise to cometric $Q$-antipodal association schemes \cite{Dam:2013aa}. Segre's result was extended to $q$ even by Bruen and Hirschfeld \cite{Bruen:1978aa}, who showed that no such subsets $S$ can exist. Thus for $q$ even, there is an absence of such interesting configurations. However, Penttila and Williford \cite{Penttila:2011aa} introduced the notion of a \emph{relative hemisystem} of $\mathsf{Q}^-(5,q)$, $q>2$ even, which give rise to atypical and rare association schemes; primitive cometric association schemes that do not arise from distance regular graphs. Their idea was to consider a non-tangent hyperplane $H$ for $\mathsf{Q}^-(5,q)$ and to only regard the points $\calPdash$ that lie outside of $H$. Now, a nonempty proper subset $S$ of $\calPdash$ such that every generator meets $S$ in a constant number $m$ of points must have $m=q/2$, according to \cite[Theorem 1]{Bamberg:2017aa}. In particular, $q$ is even. Such a configuration is a \emph{relative hemisystem}.

In the background, there is a $4$-class cometric association scheme, which we call the \emph{Penttila-Williford} scheme. It arises from taking the natural relations that are invariant under the stabiliser of $H$ in the full similarity group of $\mathsf{Q}^-(5,q)$, and it exists for all prime powers $q$, odd or even (but greater than $2$). Table \ref{tbl:summary} summarises the relations of this scheme, and full details will be given in Section \ref{section:PW}.

\begin{table}[ht]
\begin{center}
\begin{tabular}{cl}
\toprule
Relations& Description\\
\midrule
$R_0$& equality\\
$R_1$& noncollinear but collinear to the conjugate of the other\\
$R_2$& noncollinear and noncollinear to the conjugate of the other\\
$R_3$& collinear and not conjugate\\
$R_4$& conjugate \\
\bottomrule
\end{tabular}
\end{center}
\caption{The Penttila-Williford Scheme}\label{tbl:summary}
\end{table}

A relative hemisystem, if it exists, provides an example of an \emph{intriguing set} for this association scheme, in the language of De Bruyn and Suzuki \cite{De-Bruyn:2010aa}. Little is known about other intriguing sets for this association scheme, apart from the devices used in Melissa Lee's MPhil thesis \cite{Lee:2016aa} and in \cite{Bamberg:2017aa}. In this paper, we begin an investigation into the various intriguing sets for this scheme and we derive characterisation results.
Since there are four non-principal eigenspaces, there are four `types' $i\in\{1,2,3,4\}$ of intriguing sets depending on the index $i$ for which its characteristic vector belongs to the sum $V_0\oplus V_i$ of eigenspaces for the Penttila-Williford scheme. There is a particular involution $\sigma$ that acts fixed-point-freely on $\calPdash$, and we say that two points $x,y\in \calPdash$ are \emph{conjugate} if $x=y^\sigma$.
A subset $S$ of $\calPdash$ is \emph{$\sigma$-invariant}, if $x^\sigma\in \calPdash$ for all $x\in \calPdash$.
If an intriguing set $S$ is not $\sigma$-invariant, then the image $S^\sigma$ of $S$ under $\sigma$ is disjoint $S$, and we shall see that $|S|=\tfrac{1}{2}|\calPdash|$.

\begin{theorem}[Paraphrase of Theorem \ref{hemi}]\label{hemi_paraphrased}
Suppose that $Y$ is an intriguing set of type $i$, where $i\in\{1,2,3,4\}$.
\begin{enumerate}[(a)]
\item If $i=1$ or $i=3$, then $|Y|=\frac12|X|$ and $|\{p,p^\sigma\}\cap Y|=1$ for all $p\in X$.
\item If $i=2$ or $i=4$, then $Y^\sigma=Y$.
\end{enumerate}
\end{theorem}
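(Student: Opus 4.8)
The plan is to exploit the fact that the last relation $R_4$ encodes the involution $\sigma$. By the description of $R_4$ in Table~\ref{tbl:summary} and the definition of conjugacy, a pair $(x,y)$ lies in $R_4$ precisely when $y=x^\sigma$; hence the adjacency matrix $A_4$ is exactly the permutation matrix of $\sigma$. Since $\sigma$ is a fixed-point-free involution, $A_4$ is a symmetric permutation matrix with $A_4^2=I$, and for any $S\subseteq X$ one has $A_4\chi_S=\chi_{S^\sigma}$, where $\chi_S$ denotes the characteristic vector. Because $A_4$ lies in the Bose--Mesner algebra, it acts as a scalar $P_4(j)$ on each eigenspace $V_j$, and $A_4^2=I$ forces $P_4(j)\in\{+1,-1\}$; on $V_0=\langle\mathbf 1\rangle$ the scalar is the valency $k_4=1$.

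The decisive input is the list of signs $P_4(j)$ for $j\in\{1,2,3,4\}$. I would read these off the first eigenmatrix of the scheme (as computed in Section~\ref{section:PW}), expecting to find $P_4(1)=P_4(3)=-1$ and $P_4(2)=P_4(4)=+1$. This eigenvalue bookkeeping is the main technical obstacle: everything else in the argument is formal, but the theorem's clean $\{1,3\}$-versus-$\{2,4\}$ dichotomy rests entirely on these four signs, so I would verify them directly from the explicit parameters of the scheme rather than appeal to any soft argument.

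With the signs in hand the conclusion is immediate. Since $Y$ is intriguing of type $i$, I write $\chi_Y=u_0+u_i$ with $u_0=\tfrac{|Y|}{|X|}\mathbf 1\in V_0$ the projection onto $V_0$ and $u_i\in V_i$, and apply $A_4$ to obtain
\[
\chi_{Y^\sigma}=A_4\chi_Y=u_0+P_4(i)\,u_i.
\]
If $i\in\{2,4\}$, then $P_4(i)=1$, so $\chi_{Y^\sigma}=u_0+u_i=\chi_Y$ and therefore $Y^\sigma=Y$, which is part~(b). If $i\in\{1,3\}$, then $P_4(i)=-1$, so $\chi_Y+\chi_{Y^\sigma}=2u_0=\tfrac{2|Y|}{|X|}\mathbf 1$; the $x$-entry of the left-hand side equals $|\{x,x^\sigma\}\cap Y|$ (using that $\sigma$ is fixed-point-free, so $x\neq x^\sigma$), hence this integer is constant and equal to $\tfrac{2|Y|}{|X|}$. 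For a nonempty proper $Y$ this constant cannot be $0$ or $2$, so it equals $1$, giving $|\{p,p^\sigma\}\cap Y|=1$ for all $p\in X$ and $|Y|=\tfrac12|X|$, which is part~(a).
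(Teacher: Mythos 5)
Your proof is correct, and it reaches the theorem by a genuinely different route from the paper's. The paper's proof of Theorem \ref{hemi} is combinatorial: it applies Result \ref{bart} to both graphs $(X,R_3)$ and $(X,R_1)$, reads off from the matrix $P$ that the eigenvalue of $R_1$ on $V_i$ is $(-1)^i\theta$, where $\theta$ is the eigenvalue of $R_3$ on $V_i$, and uses the geometric fact (from $\sigma$ commuting with $\perp$) that the number of $R_1$-neighbours of $p$ in $Y$ equals the number of $R_3$-neighbours of $p^\sigma$ in $Y$; comparing the four counts $a_1,b_1,a_3,b_3$ then forces $p$ and $p^\sigma$ to lie on the same side of $Y$ when $i$ is even and on opposite sides when $i$ is odd. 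You instead work directly with $A_4$, the permutation matrix of $\sigma$, which acts as the scalar $P_{i4}\in\{\pm 1\}$ on $V_i$; your signs are right (the last column of $P$ reads $1,-1,1,-1,1$), and applying $A_4$ to $\chi_Y=u_0+u_i$ settles both cases at once. The two arguments rest on the same spectral datum --- indeed $A_1=A_3A_4$, so the paper's sign pattern $(-1)^i$ relating $R_1$ to $R_3$ is exactly your statement that $A_4$ has eigenvalue $(-1)^i$ on $V_i$ --- but your packaging is shorter and arguably tighter: it avoids the comparison of $a_1,b_1$ with $a_3,b_3$ (where the paper's phrase ``an easy calculation gives $a_1=b_3$ and $a_3=b_1$'' is only literally true once $|Y|=\tfrac12|X|$ is known; the robust version of that step is $a_1\ne a_3$ and $b_1\ne b_3$), and it simultaneously establishes one implication of Lemma \ref{sigmaclosed2}, which the paper proves separately later via the dual eigenvalue matrix $Q$ and the identity $A_i\chi_{p^\sigma}=A_{4-i}\chi_p$ (Lemma \ref{sigmaclosed}) --- morally the same computation as yours. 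What the paper's route buys is that it stays entirely within the intriguing-set counting framework of Result \ref{bart} and produces the explicit intersection parameters; what yours buys is economy and a reusable spectral fact about how conjugation acts on each eigenspace.
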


We show in Section \ref{section:tight} that an intriguing set of type 2 has size at least $4(q+1)$ (see Lemma \ref{alpha4}), and this bound is sharp.
In Theorem \ref{characterise_type2}, we characterise the smallest examples for when $q\ge 59$.
We also show in Section \ref{type4} that an intriguing set of type 4 has size at least $q^2(q-1)$, and this bound is sharp.
The following result characterises the smallest examples.

\begin{theorem}[Paraphrase of Theorem \ref{smalltype4}]\label{characterisation_type4}
Suppose $Y$ is an intriguing set of type 4 of the Penttila-Williford scheme. Then $|Y|\ge q^2(q-1)$, and in the case of equality, there exists a point $p\in H\cap \mathsf{Q}^-(5,q)$ such that $Y$ consists of the $q^2(q-1)$ points of $\mathsf{Q}^-(5,q)\setminus H$ that are collinear to $p$.
\end{theorem}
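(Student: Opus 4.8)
The plan is to assume the lower bound $|Y|\ge q^2(q-1)$ (established in Section \ref{type4}) and to analyse the equality case. Throughout I would use that $\mathsf{Q}^-(5,q)$ is the generalized quadrangle $\mathsf{GQ}(q,q^2)$, so its collinearity graph is triangle-free and two noncollinear points have exactly $q^2+1$ common neighbours. Since $|X|=q^2(q^2-1)$, equality means $|Y|/|X|=1/(q+1)$, and by Theorem \ref{hemi_paraphrased}(b) the set $Y$ is $\sigma$-invariant. First I would feed $|Y|/|X|=1/(q+1)$ and the eigenvalue of the collinearity relation $R_3$ on $V_4$ (from Section \ref{section:PW}) into the standard intriguing-set equations to obtain the tight local data: every point of $Y$ is $R_3$-related (collinear, not conjugate) to exactly $q-1$ points of $Y$, while every point of $X\setminus Y$ is $R_3$-related to exactly $q(q-1)$ points of $Y$.

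Next I would pass to generators. For a generator $\ell$ of $\mathsf{Q}^-(5,q)$ not contained in $H$, the $q$ points of $\ell\cap X$ are pairwise collinear, hence pairwise $R_3$-related; write $t_\ell=|\ell\cap Y|$. Because the collinearity graph is triangle-free, those collinear neighbours in $Y$ of a point $x\in Y$ that lie on a single generator through $x$ account for $t_\ell-1$ of them, and neighbours on distinct generators are themselves noncollinear. Call a generator \emph{full} if $\ell\cap X\subseteq Y$, i.e. $t_\ell=q$. The crucial structural step is to show that in the equality case every point of $Y$ lies on a full generator. Granting this, the $q-1$ neighbours of $x\in Y$ all lie on that full generator, so $x$ lies on a \emph{unique} full generator (two would give $2(q-1)>q-1$ neighbours); thus the full generators partition $Y$ into $q(q-1)$ classes of size $q$.

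The geometric heart is then to show that the full generators are concurrent. Suppose $\ell,\ell'$ are full and disjoint, pick $x\in\ell\cap X$, and let $y'$ be the unique point of $\ell'$ collinear to $x$. If $y'\in X$ then $y'\in\ell'\cap X\subseteq Y$ is a collinear neighbour of $x$ off $\ell$, contradicting that all neighbours of $x$ lie on $\ell$; hence $y'=\ell'\cap H$, so every point of $\ell\cap X$ is collinear to $r':=\ell'\cap H\in H\cap\mathsf{Q}^-(5,q)$. As $\ell\cap X$ spans $\ell$, this forces $\ell\subseteq r'^{\perp}$, whence $\ell$ passes through the vertex $r'$ of the cone $r'^{\perp}\cap\mathsf{Q}^-(5,q)$ and $\ell\cap\ell'\ni r'$, a contradiction. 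So any two full generators meet. Since a plane meets $\mathsf{Q}^-(5,q)$ in at most two lines while there are $q(q-1)\ge 6$ full generators (for $q\ge 3$), they cannot be coplanar, and pairwise intersection forces a common point $p$. If $p\in X$ then $p\in Y$ would lie on all $q(q-1)$ full generators, again violating the neighbour count; hence $p\in H\cap\mathsf{Q}^-(5,q)$. Every full generator passes through $p\in H$ and so meets $H$ only in $p$, whence $Y=\bigcup_{\ell\text{ full}}(\ell\cap X)$ is exactly the set of points of $X$ collinear to $p$, namely $p^{\perp}\cap X$; the $q(q-1)$ generators through $p$ not inside $H$, each contributing $q$ points, give $|Y|=q^2(q-1)$.

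I expect the full-generator step to be the main obstacle: a priori the $q-1$ collinear neighbours of a point of $Y$ could be distributed over several generators, and ruling this out is exactly where the sharpness of the bound must be used. Concretely I would show that equality forces the number of ``spread'' configurations --- collinear triples $x,y,z$ in $Y$ with $x\sim y\sim z$ but $x\not\sim z$ --- to vanish, so that the collinearity graph induced on $Y$ is a disjoint union of cliques; triangle-freeness of $\mathsf{Q}^-(5,q)$ then identifies each clique with a full generator. Tying this vanishing precisely to the equality case of the size bound, rather than merely to the two regularity equations (which are consistent with generators meeting $Y$ in intermediate numbers of points), will be the delicate point.
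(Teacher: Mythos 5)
Your endgame is sound: \emph{granting} that every point of $Y$ lies on a generator $\ell$ with $\ell\cap X\subseteq Y$, your argument that such full generators pairwise meet (via the tangent-cone argument), cannot all be coplanar, hence are concurrent at a point that must lie in $H\cap\calQ$, is correct and would finish the proof. (One slip: the collinearity graph of $\mathsf{Q}^-(5,q)$ is \emph{not} triangle-free --- the points of a generator form a $(q+1)$-clique; what you actually use, correctly, is the generalized-quadrangle property that any set of pairwise collinear points lies on a common line.) The genuine gap is exactly the step you flag yourself: that in the equality case the collinearity graph induced on $Y$ is a disjoint union of full generators. This is the heart of the theorem, and your proposal contains no argument for it --- only the statement that you ``would show'' the path-configurations vanish. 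Note also that there is no extra ``sharpness'' input available beyond the regularity data: the paper's own proof of Theorem \ref{smalltype4} uses nothing but $|Y|=q^2(q-1)$ together with the two constants ($q-1$ neighbours in $Y$ from inside, $q^2-q$ from outside), so the vanishing you need must be extracted from precisely those hypotheses by genuine geometric counting; it does not fall out of the eigenvalue data or of any separate appeal to the minimality of $|Y|$.

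For comparison, the paper never looks at cliques inside $Y$ at all; it works from the hyperplane $H$. For each $p\in H\cap\calQ$ it first shows (Lemma \ref{properties}(a), by a counting argument using the GQ property) that all lines of $\calQ$ through $p$ not contained in $H$ meet $Y$ in the same number $s_p$ of points. It then derives the identities $\sum_p s_p=q^3+q$ and $\sum_p s_p(s_p-1)=q(q-1)$, plus formulas for the sums of $s_p$ over lines, elliptic and hyperbolic solids, and conic sections of $H\cap\calQ$ (Lemma \ref{properties}(c)--(g) and Lemma \ref{isthisused1}), culminating in $s_p\in\{0,1,q\}$ for every $p$ (Lemma \ref{isthisused2}). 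The identity $\sum_p s_p(s_p-1)=q(q-1)$ then forces a unique point $p$ with $s_p=q$, and that point is your point of concurrency: all $q^2-q$ lines of $\calQ$ through it outside $H$ are full, which pins down $Y$. So the missing middle of your outline corresponds to several lemmas' worth of counting in the paper, and until you supply an argument for the full-generator claim (or an equivalent), what you have is a plausible reduction, not a proof.
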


\section{Preliminaries}
Let $\Gamma=(X,R)$ be a finite connected regular graph, and we will also assume throughout that $\Gamma$ is nontrivial; neither empty nor complete. A subset $Y$ of $X$ is called an {\sl intriguing set} if and only if there are integers $h_1,h_2\ge0$ such that every vertex of $Y$ is adjacent to exactly $h_1$ vertices of $Y$ and every vertex of $X\setminus Y$ is adjacent to exactly $h_2$ vertices of $Y$. We will use the symbol $j$ to denote the `all-ones' row vector. We always assume a given numbering $X=\{x_1,\dots,x_n\}$ of the vertices of $X$ where $n=|X|$. The adjacency matrix of $\Gamma$ is the real $(n\times n)$-matrix $A$ with $A_{ij}=1$ when $x_i$ and $x_j$ are adjacent and $A_{ij}=0$ otherwise. Notice that $A$ is a real symmetric matrix, so the row space $\mathbb{R}^n$ decomposes into the orthogonal sum of the eigenspaces of $A$, which we also call the eigenspaces of $\Gamma$. For every subset $Y$ of $X$ we denote by $\chi_Y$ its characteristic vector, that is the real row vector of length $n$ whose $i$-th entry is 1, if $x_i\in Y$, and $0$ otherwise.
The following theorem is from \cite{De-Bruyn:2010aa}.

\begin{result}[\cite{De-Bruyn:2010aa}]\label{bart}
Let $\Gamma=(X,R)$ be a finite connected nontrivial regular graph with eigenspaces $V_0,\dots,V_s$ where $V_0=\erz{j}$ and adjacency matrix $A$. Let $Y$ be a non-empty proper subset of $X$.
\begin{enumerate}[(i)]
\item $Y$ is an intriguing set of $\Gamma$ if and only if $\chi_Y\in V_0\oplus V_i$ for some integer $1\le i\le s$.
\item If $Y$ is an intriguing set, $i\ge 1$ is the integer with $\chi_Y\in V_0\oplus V_i$, and if $\theta$ is the eigenvalue of $A$ on $V_i$, then every vertex of $X\setminus Y$ is adjacent to exactly
    \[
    \frac{k-\theta}{|X|}\cdot |Y|
    \]
    elements of $Y$ and every vertex of $Y$ is adjacent to exactly
    \[
    \theta+\frac{k-\theta}{|X|}\cdot |Y|
    \]
    elements of $Y$.
\end{enumerate}
\end{result}

If $Y$ is an intriguing set with $Y\ne \varnothing,X$ and $i$ is the unique index such that $\chi_Y\in V_0\oplus V_i$, then we say that $Y$ is an intriguing set of \emph{type} $i$, or \emph{for the eigenspace} $V_i$. Notice that $\chi_Y=cj+v$ with $c:=|Y|/|X|$ and $v\in V_i$. Since eigenvectors for distinct eigenvalues have inner product zero, the following well-known result follows easily.

\begin{result}\label{Bart2}
Let $\Gamma=(X,R)$ be a finite connected regular graph. Let $Y_1$ and $Y_2$ be intriguing sets of $\Gamma$ for different eigenspaces. Then $|Y_1\cap Y_2|\cdot|X|=|Y_1|\cdot |Y_2|$.
\end{result}

We give an example that will be used later.

\begin{example}\label{exampleQminus5q}
The collinearity graph of $\mathsf{Q}^-(5,q)$ is a strongly regular graph with eigenvalues $(q^2+1)q$, $q-1$ and $-(q^2+1)$ where $(q^2+1)q$ has eigenspace $V_0:=\erz{j}$. Intriguing sets for the eigenvalue $-(q^2+1)$ are
hemisystems (by Segre's theorem), that is,
sets of points of size $(q^3+1)(q+1)/2$ such that every generator meets in $(q+1)/2$
elements. The intriguing sets for the eigenvalue $q-1$ are usually called `tight sets' in the literature. It follows from the above results that for any tight set $Y$ we have $|Y|=c(q+1)$, where $c$ is the number of points in $Y$ collinear to a given point not in $Y$. There exist many different examples of tight sets of $\mathsf{Q}^-(5,q)$. The $q+1$ points of a line form a tight set. The union of two perpendicular conics is a tight set. Moreover, any union of disjoint tight sets is a tight set, so a large number of examples can be obtained by taking a disjoint union of lines and pairs of perpendicular conics. If $c$ is sufficiently small, it was proven in \cite[Theorem 2.15]{Metsch:2016aa} that this construction describes all tight sets of size $c(q+1)$.
\end{example}

\section{The Penttila--Williford scheme}\label{section:PW}

We consider a finite $5$-dimensional projective space $\calP=\PG(5,q)$, $q>2$, and in there, an elliptic quadric $\calQ=\mathsf{Q}^-(5,q)$. We let $\perp$ be the associated polarity of $\calP$ whose absolute points are the points of $\calQ$. Let $H$ be a hyperplane of $\calP$ meeting $\calQ$ in a parabolic quadric $\mathsf{Q}(4,q)$.

We consider the point set $X:=\calQ\setminus (\calQ\cap H)$ obtained from $\calQ$ by removing the points of $H\cap \calQ$. Note that $|X|=q^2(q^2-1)$. We define $\sigma\colon X\to X$ as follows. Let $p$ be a point of $\calQ$
not in $H$. Then the line joining $H^\perp$ and $p$ is a hyperbolic line and so contains a unique second point $p'$ of $\calQ$.
Let $\sigma$ be the central collineation of $\PG(5,q)$ having axis $H$ and
centre $H^\perp$, mapping $p$ to $p'$. Then $\sigma$ commutes with the polarity $\perp$ and so stabilises $\calQ$.
Moreover, since $pp'$ has only two points of $\calQ$ on it, we have $\sigma(p')=p$ and hence $\sigma^2$ is the identity.
Via the Klein correspondence, we can map the points of $X$ to lines of the Hermitian surface $\mathsf{H}(3,q^2)$ that lie outside of a symplectic subgeometry $\mathsf{W}(3,q)$. This subgeometry can alternatively be defined as the fixed subspaces of a Baer involution, and this involution corresponds to our map $\sigma$.

Two points $p,p'$ of $\calQ$ are said to be \emph{collinear} when $p\ne p'$ and $p'\in p^\perp$, that is, $p$ and $p'$ are distinct points that span a line of $\PG(5,q)$ contained in $\calQ$. In this case we write $p\sim p'$. On $X$ the following symmetric relations $R_i$, $0\le i\le 4$, define a cometric association scheme on $X$
(see \cite[Theorem 1]{Penttila:2011aa} and \cite[Section 5.4]{Lee:2016aa}).
\begin{align*}
R_0&=\{(u,v)\in X\times X\mid u=v\},
\\
R_1&=\{(u,v)\in X\times X\mid v\not\sim u\sim v^\sigma\},
\\
R_2&=\{(u,v)\in X\times X\mid v\not\sim u\not\sim v^\sigma\},
\\
R_3&=\{(u,v)\in X\times X\mid v\sim u\not\sim v^\sigma\},
\\
R_4&=\{(u,v)\in X\times X\mid u=v^\sigma\}.
\end{align*}

We will frequently refer to the `association scheme $X$', which will mean the association scheme on $X$ given by the above relations. The matrix of eigenvalues $P$ and matrix of dual eigenvalues $Q$ of this association scheme are as follows:

\begin{align*}
P=&
\begin{pmatrix}1 & \left( q-1\right) \,\left( {q}^{2}+1\right)  & \left( q-2\right) \,q\,\left( {q}^{2}+1\right)  & \left( q-1\right) \,\left( {q}^{2}+1\right)  & 1\cr 1 & {q}^{2}+1 & 0 & -\left( {q}^{2}+1\right)  & -1\cr 1 & q-1 & -2\,q & q-1 & 1\cr 1 & -\left( q-1\right)  & 0 & q-1 & -1\cr 1 & -{\left( q-1\right) }^{2} & 2\,\left( q-2\right) \,q & -{\left( q-1\right) }^{2} & 1\end{pmatrix},
\\
Q=&\begin{pmatrix}1 & \frac{{\left( q-1\right) }^{2}\,q}{2} & \frac{\left( q-2\right) \,\left( q+1\right) \,\left( {q}^{2}+1\right) }{2} & \frac{\left( q-1\right) \,q\,\left( {q}^{2}+1\right) }{2} & \frac{q\,\left( {q}^{2}+1\right) }{2}\cr 1 & \frac{\left( q-1\right) \,q}{2} & \frac{\left( q-2\right) \,\left( q+1\right) }{2} & -\frac{\left( q-1\right) \,q}{2} & -\frac{\left( q-1\right) \,q}{2}\cr 1 & 0 & -q-1 & 0 & q\cr 1 & -\frac{\left( q-1\right) \,q}{2} & \frac{\left( q-2\right) \,\left( q+1\right) }{2} & \frac{\left( q-1\right) \,q}{2} & -\frac{\left( q-1\right) \,q}{2}\cr 1 & -\frac{{\left( q-1\right) }^{2}\,q}{2} & \frac{\left( q-2\right) \,\left( q+1\right) \,\left( {q}^{2}+1\right) }{2} & -\frac{\left( q-1\right) \,q\,\left( {q}^{2}+1\right) }{2} & \frac{q\,\left( {q}^{2}+1\right) }{2}\end{pmatrix}.
\end{align*}

If $A_i$ denotes the adjacency matrix of the graph $(X,R_i)$, this means that we can number the eigenspaces $V_0,\dots,V_4$ of the association scheme in such a way that $A_jv=P_{ij}v$ for $v\in V_i$, where the entries of $P$ are numbered $P_{ij}$ with $i,j=0,\dots,4$. Notice that this implies that $V_0=\erz{\chi_X}$.

The reason to exclude the case when $q=2$ is that the relation $R_2$ is empty when $q=2$ as can be seen from the middle entry in the first line of $P$.

Let $Y$ be a non-empty proper subset of $X$. We call $Y$ an \emph{intriguing set} of this association scheme, if $\chi_Y\in V_0\oplus V_i$ for some $i\in\{1,2,3,4\}$. Result \ref{bart} shows that an intriguing set $Y$ of the association scheme is an intriguing set of each graph $(V,R_i)$.

\begin{example}[Intriguing sets of type 1]
It was shown in \cite{Bamberg:2017aa} that an intriguing set of type 1, if it exists, is a relative hemisystem and $q$ is even. Moreover, Penttila and Williford \cite{Penttila:2011aa} provided an infinite family of examples that exist for each even prime power $q>2$. Further examples were constructed in
\cite{Bamberg:2016aa,Cossidente:2013aa,Cossidente:2015aa,Cossidente:2014aa}.
\end{example}

\begin{example}[Some intriguing sets of type 4]\label{examplecollineartopoint}
Let $w$ be a point of $H\cap \calQ$ and let $Y$ be the set consisting of all points of $X$ that are in $\calQ$ collinear to $w$. Then $|Y|=(q^2-q)q$, every point of $Y$ is collinear to $q-1$ points of $Y$ and every point of $X\setminus Y$ is collinear to $q^2-q$ points of $Y$. Hence $Y$ is an intriguing set of $(X,R_3)$ corresponding to the eigenvalue $q-1-(q^2-q)=-(q-1)^2$. Since $V_4$ is the eigenspace of this graph for the eigenvalue $-(q-1)^2$, it follows that $\chi_Y\in V_0\oplus V_4$, so $Y$ is also an intriguing set of the association scheme $X$.
\end{example}

Examples of intriguing sets of types 2 and 3 will appear in Section \ref{section:tight}. In the following result, we show that an intriguing set is either $\sigma$-invariant or has its image under $\sigma$ disjoint from it.

\begin{theorem}\label{hemi}
Suppose that $Y$ is a subset of $X$ such that $\chi_Y\in V_0\perp V_i$ for some $i\in\{1,2,3,4\}$.
\begin{enumerate}[(a)]
\item If $i=1$ or $i=3$, then $|Y|=\frac12|X|$ and $|\{p,p^\sigma\}\cap Y|=1$ for all $p\in X$.
\item If $i=2$ or $i=4$, then $Y^\sigma=Y$.
\end{enumerate}
\end{theorem}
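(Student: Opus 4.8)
The plan is to exploit the observation that the adjacency matrix $A_4$ of the relation $R_4$ is nothing other than the permutation matrix of the involution $\sigma$. Indeed, by the definition of $R_4$ we have $(A_4)_{ij}=1$ precisely when $x_i=x_j^\sigma$, and since $\sigma^2$ is the identity this is equivalent to $x_j=x_i^\sigma$. Consequently, for an arbitrary subset $Y$ of $X$ the $i$-th entry of $A_4\chi_Y$ equals $1$ exactly when $x_i^\sigma\in Y$, that is, when $x_i\in Y^\sigma$; hence
\[
A_4\chi_Y=\chi_{Y^\sigma}.
\]
This single identity is what converts the eigenvalue information in the matrix $P$ into a statement about the action of $\sigma$ on $Y$.

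Next I would feed in the spectral data. By Result~\ref{bart} we may write $\chi_Y=cj+v$ with $c=|Y|/|X|$ and $v\in V_i$. Applying $A_4$ and using that $(X,R_4)$ is $1$-regular (so $A_4 j=j$) together with $A_4 v=P_{i4}v$, I obtain
\[
\chi_{Y^\sigma}=cj+P_{i4}\,v.
\]
The only numerical input required is the final column of the eigenvalue matrix, namely $(P_{04},P_{14},P_{24},P_{34},P_{44})=(1,-1,1,-1,1)$. Thus $P_{i4}=+1$ for $i\in\{2,4\}$ and $P_{i4}=-1$ for $i\in\{1,3\}$, and the two cases of the theorem correspond exactly to these two signs.

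For part (b), when $i\in\{2,4\}$ we get $\chi_{Y^\sigma}=cj+v=\chi_Y$, whence $Y^\sigma=Y$ at once. For part (a), when $i\in\{1,3\}$ we instead get $\chi_{Y^\sigma}=cj-v$, so that $\chi_Y+\chi_{Y^\sigma}=2cj$. Reading this coordinatewise, the entry at the vertex $x_i$ of the left-hand side is the number of elements of the pair $\{x_i,x_i^\sigma\}$ lying in $Y$ (here one uses that $\sigma$ is fixed-point-free, so $x_i\ne x_i^\sigma$), while the corresponding entry of the right-hand side is $2c$. Hence $|\{x_i,x_i^\sigma\}\cap Y|=2c$ is a constant integer lying in $\{0,1,2\}$. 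Because $Y$ is a non-empty proper subset of $X$, the constant $2c$ can be neither $0$ (which would force $Y=\varnothing$) nor $2$ (which would force $Y=X$); therefore $2c=1$, giving simultaneously $|Y|=\tfrac12|X|$ and $|\{x_i,x_i^\sigma\}\cap Y|=1$ for every vertex.

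I expect no serious obstacle here: the argument is a short piece of linear algebra once the right viewpoint is fixed. The two points that require genuine care are the bookkeeping in the first step — correctly identifying $A_4$ with the permutation action of $\sigma$ and verifying $A_4\chi_Y=\chi_{Y^\sigma}$ rather than $\chi_{\sigma^{-1}(Y)}$ (harmless here since $\sigma$ is an involution) — and the closing integrality argument, where the non-emptiness and properness of $Y$, implicit in its being an intriguing set, is exactly what rules out the degenerate constants $2c\in\{0,2\}$.
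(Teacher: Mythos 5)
Your proof is correct, but it follows a genuinely different route from the paper's. The paper proves Theorem \ref{hemi} by intersection-number counting: it applies Result \ref{bart} to both graphs $(X,R_3)$ and $(X,R_1)$, observes from $P$ that the eigenvalue of $R_1$ on $V_i$ is $(-1)^i\theta$ (where $\theta$ is the eigenvalue of $R_3$ on $V_i$), uses the fact that the points of $Y$ in relation $R_1$ to $p$ are precisely the points of $Y$ collinear to $p^\sigma$, and then compares the numbers $a_1,b_1$ with $a_3,b_3$ to decide whether $p^\sigma$ lies in $Y$ according to whether $p$ does. You instead encode $\sigma$ directly into the algebra: $A_4$ is the (symmetric) permutation matrix of $\sigma$, so $A_4\chi_Y=\chi_{Y^\sigma}$, and the last column $(1,-1,1,-1,1)$ of $P$ gives $\chi_{Y^\sigma}=cj+(-1)^iv$; part (b) is then immediate, and part (a) follows from integrality of $\chi_Y+\chi_{Y^\sigma}=2cj$ together with $Y\ne\varnothing,X$. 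This is essentially the mechanism of the paper's later Lemma \ref{sigmaclosed} (there phrased via $A_i\chi_{p^\sigma}=A_{4-i}\chi_p$ and the sign pattern of $Q$), deployed earlier and more directly. Your version buys two things. First, it is shorter and needs only the decomposition $\chi_Y=cj+v$, not the intersection numbers of Result \ref{bart}(ii). Second, it sidesteps a delicate point in the paper's odd case: the identities $a_1=b_3$ and $a_3=b_1$, claimed there as ``an easy calculation'', are each equivalent to $2\theta|Y|/|X|=\theta$, i.e.\ to $|Y|=\tfrac12|X|$, which is part of the conclusion being proved; to make that counting argument airtight one should instead argue that a conjugate pair contained in $Y$ forces $b_1=b_3$ and hence $Y=X$, while a pair avoiding $Y$ forces $a_1=a_3$ and hence $Y=\varnothing$. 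Your integrality step ($2c\in\{0,1,2\}$ with the extremes excluded) achieves exactly this without any repair. Note also, as you correctly flagged, that part (a) genuinely requires $Y$ to be nonempty and proper --- the theorem's word ``subset'' notwithstanding --- since $\chi_\varnothing$ and $\chi_X$ lie in $V_0\oplus V_i$ for every $i$.
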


\begin{proof}
Let $\theta$ be the eigenvalue of the relation $R_3$ on $V_i$. Result \ref{bart} shows that every point of $X\setminus Y$ is in relation $R_3$ to $a_3:=\frac{|Y|}{|X|}(k-\theta)$ points of $Y$ (where $k$ is the valency of $R_3$), and every point of $Y$ is in relation $R_3$ to $b_3:=a_3+\theta$ points of $Y$. Inspection of the first matrix of eigenvalues shows that $(-1)^i\theta$ is the eigenvalue of $R_1$ on $V_i$. Result \ref{bart} thus shows that every point of $X\setminus Y$ is in relation $R_1$ to $a_1:=\frac{|Y|}{|X|}(k-(-1)^i\theta)$ points of $Y$, and every point of $Y$ is in relation $R_1$ to $b_1:=a_1+(-1)^i\theta$ points of $Y$. Hence, if $p\in X\setminus Y$, then $p^\sigma$ is collinear to $a_1$ points of $Y$, and for every point $p\in Y$, the point $p^\sigma$ is collinear to exactly $b_1$ points of $Y$. Since $\theta\ne 0$, we have $a_1\ne b_1$ and $a_3\ne b_3$.

If $i$ is even, that is $i=2$ or $i=4$, then $a_1=a_3$ and $b_1=b_3$, and it follows immediately that for every point $p$, both points $p$ and $p^\sigma$ are in relation $R_3$ to the same number of points of $Y$, hence both points lie in $Y$ or both points do not. Hence $Y=Y^\sigma$ in this case.

If $i$ is odd, then $\theta=-(q^2+1)$ or $\theta=q-1$ and an easy calculation gives $a_1=b_3$ and $a_3=b_1$. This time it follows for every point $p\in X$ that $p$ and $p^\sigma$ are in relation $R_3$ to a different number of points of $Y$ and hence, exactly one of these two points lies in $Y$.
\end{proof}

As a corollary, we obtain the following result of \cite{Bamberg:2017aa}.

\begin{theorem}
If $Y$ is a non-trivial $m$-cover of $H(3,q^2)\setminus W(3,q)$, then $q$ is even, $m=\frac q2$, and from any pair of conjugate lines exactly one lies in $Y$.
\end{theorem}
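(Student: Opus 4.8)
The plan is to recognise $Y$ as an intriguing set of type $1$ for the Penttila--Williford scheme and then to read off the three assertions from Theorem~\ref{hemi}. First I would translate the hypothesis through the Klein correspondence recalled in Section~\ref{section:PW}: the lines of $\mathsf{H}(3,q^2)$ lying outside $\mathsf{W}(3,q)$ are the points of $X$, the points of $\mathsf{H}(3,q^2)$ are the generators of $\calQ$ with incidence reversed, and the Baer involution defining $\mathsf{W}(3,q)$ becomes $\sigma$, so that a pair of conjugate lines becomes a pair $\{p,p^\sigma\}$. Thus an $m$-cover $Y$ becomes a subset of $X$, which I again call $Y$, with the property that every generator of $\calQ$ not contained in $H$ meets $Y$ in exactly $m$ points; being non-trivial, $Y$ is a non-empty proper subset of $X$.

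Second, I would show that $Y$ is an intriguing set of the graph $(X,R_3)$. The key observation is that $R_3$ is exactly the collinearity graph on $X$: a pair of conjugate points $\{p,p^\sigma\}$ spans the hyperbolic line $pH^\perp$, which is not contained in $\calQ$, so conjugate points are never collinear, and hence two points of $X$ are in relation $R_3$ precisely when they are collinear. Every point $p\in X$ lies on exactly $q^2+1$ generators, each meeting $H$ in a single point and so carrying $q-1$ further points of $X$, and two generators through $p$ meet only in $p$ (as $\calQ$ has rank $2$). Counting the points of $Y$ collinear to a fixed $p$ generator by generator then gives $(q^2+1)(m-1)$ such points when $p\in Y$, and $(q^2+1)m$ such points when $p\in X\setminus Y$. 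Thus $Y$ satisfies the definition of an intriguing set of $(X,R_3)$, and the difference of the two counts is $-(q^2+1)$.

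Third, I would identify the type. By Result~\ref{bart} this difference equals the eigenvalue $\theta$ of $R_3$ on the eigenspace to which the non-principal part of $\chi_Y$ belongs, so $\theta=-(q^2+1)$; inspecting the $R_3$-column of $P$ shows that $-(q^2+1)$ is the eigenvalue of $R_3$ on $V_1$ and on no other eigenspace, so $\chi_Y\in V_0\oplus V_1$ and $Y$ is an intriguing set of type $1$. Theorem~\ref{hemi}(a) then yields $|Y|=\tfrac12|X|$ together with $|\{p,p^\sigma\}\cap Y|=1$ for every $p$, which says that exactly one line of each conjugate pair lies in $Y$. Finally, substituting $|Y|=\tfrac12|X|$, $\theta=-(q^2+1)$ and the valency $k=(q-1)(q^2+1)$ into the neighbour count of Result~\ref{bart}(ii) gives $(q^2+1)(m-1)=(q^2+1)(q-2)/2$, whence $m=q/2$; since $m$ is an integer, $q$ must be even.

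The main obstacle I anticipate is the bookkeeping in the second and third steps. One must verify carefully that $(X,R_3)$ really is the collinearity graph, so that the generator-by-generator count computes the \emph{full} $R_3$-neighbourhood of $p$ and nothing is double-counted or omitted, and one must check that the eigenvalue $-(q^2+1)$ singles out a single scheme-eigenspace $V_1$ rather than a sum of several. This last point is exactly what upgrades ``$Y$ is intriguing for the graph $(X,R_3)$'' to ``$Y$ is intriguing of type $1$ for the scheme'', and it is what licenses the application of Theorem~\ref{hemi}.
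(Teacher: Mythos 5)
Your proposal is correct and follows essentially the same route as the paper: dualise to $\mathsf{Q}^-(5,q)\setminus H$, count collinear points generator-by-generator to see that $Y$ is an intriguing set for the eigenvalue $-(q^2+1)$ of $(X,R_3)$, note that the eigenspace of $(X,R_3)$ for this eigenvalue is the scheme eigenspace $V_1$, and then apply Theorem \ref{hemi} and Result \ref{bart}(ii) to obtain $|Y|=\tfrac12|X|$, one line from each conjugate pair, and $m=q/2$ (forcing $q$ even). The extra verifications you supply --- that $R_3$ is precisely the collinearity relation on $X$ and that $-(q^2+1)$ occurs on no other eigenspace --- are details the paper leaves implicit, and they are argued correctly.
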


\begin{proof}
We do this in the dual setting, that is we consider a set $Y$ of points of $\mathsf{Q}^-(5,q)\setminus \mathsf{Q}(4,q)$ such that every line of $\mathsf{Q}^-(5,q)$ that is not contained in $\mathsf{Q}(4,q)$ has exactly $m$ points in $Y$. Hence, in the graph $(X,R_3)$, every point of $Y$ is collinear to exactly $ (q^2+1)(m-1)$ points of $Y$, and every point of $X\setminus Y$ is collinear to exactly $(q^2+1)m$ points of $Y$. Therefore Result \ref{bart} implies that $Y$ is an intriguing set for the eigenvalue $-(q^2+1)$ of $(X,R_3)$. Since the eigenspace of the graph $(X,R_3)$ for this eigenvalue is $V_1$, which is an eigenspace of the association scheme, it follows that $Y$ is an intriguing set of the scheme. Theorem \ref{hemi} shows that $|Y|=|X|/2$ and that $Y$ contains exactly one point of every pair of conjugate points.
Since each point of $X\setminus Y$ is collinear to $m(q^2+1)$ points of $Y$, part (ii) of Result \ref{bart} gives $m=q/2$.
\end{proof}

\section{A connection to tight sets of $\mathsf{Q}^-(5,q)$}\label{section:tight}

In this section, we consider intriguing sets of the graph $(X,R_3)$ corresponding to the eigenvalue $q-1$. Notice that the eigenspace for this eigenvalue of $(X,R_3)$ is $V_2\oplus V_3$. Hence a subset $Y$ of $X$ is an intriguing set of $(X,R_3)$ if and only if $\chi_Y\in V_0\oplus V_2\oplus V_3$. We will investigate when $Y$ is also an intriguing set of the association scheme $X$, that is when $\chi_Y\in V_0\oplus V_2$ or $\chi_Y\in V_0\oplus V_3$.

By Result \ref{bart}, every point $p\in X\setminus Y$ is collinear to exactly $\alpha:=|Y|/(q+1)$ points of $X$ and every point of $Y$ is collinear to exactly $q-1+\alpha$ points of $Y$. Recall that $X$ is the set of points of the elliptic quadric $\calQ$ that lie outside the non-tangent hyperplane $H$ of the ambient projective space.

\begin{lemma}\label{tightset}
If $Y$ is an intriguing set of the graph $(X,R_3)$ corresponding to the eigenvalue $q-1$, then $Y$ is a tight set of $\mathsf{Q}^-(5,q)$.
\end{lemma}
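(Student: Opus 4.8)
The plan is to show that the characteristic vector $\chi_Y$, regarded as a vector indexed by all of $\calQ$ and extended by zero on $H\cap\calQ$, satisfies the equitable equation
$$A\chi_Y=(q-1)\chi_Y+\alpha j$$
that characterises a tight set of $\calQ$ for the eigenvalue $q-1$, where $A$ is the collinearity matrix of $\calQ$ and $\alpha=|Y|/(q+1)$. First I would record the elementary fact that, restricted to $X\times X$, the relation $R_3$ is exactly $\calQ$-collinearity: conjugate points lie on a hyperbolic line and are never collinear, so any $\calQ$-collinear pair inside $X$ falls into $R_3$, and conversely $R_3$ forces collinearity. Hence the hypothesis together with Result \ref{bart} says that every point of $X\setminus Y$ is $\calQ$-collinear to exactly $\alpha$ points of $Y$ and every point of $Y$ to exactly $\alpha+(q-1)$ points of $Y$; a short check using the valencies in Example \ref{exampleQminus5q} shows that this $\alpha=|Y|(k-(q-1))/|\calQ|$ is precisely the tight-set parameter forced on $\calQ$. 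Thus the equitable equation already holds at every coordinate indexed by $X$, and the whole problem reduces to the deleted points: I must show that each point $w$ of the parabolic quadric $W:=H\cap\calQ=\mathsf{Q}(4,q)$ (all of which lie outside $Y$) is collinear to exactly $\alpha$ points of $Y$.

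To isolate the discrepancy I would introduce the error vector $u:=A\chi_Y-(q-1)\chi_Y-\alpha j$ on $\calQ$. By the previous paragraph $u$ vanishes on every $X$-coordinate, and on a point $w\in W$ it equals $N(w)-\alpha$, where $N(w)$ is the number of points of $Y$ collinear to $w$. The goal is exactly $u=0$.

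The engine is the strongly regular structure of the collinearity graph of $\calQ$ from Example \ref{exampleQminus5q}: with valency $k=q(q^2+1)$ and non-principal eigenvalues $q-1$ and $-(q^2+1)$ one has the spectral identity $(A-(q-1)I)(A+(q^2+1)I)=(q^2+1)J$. Applying both sides to $\chi_Y$, the left side factors through $(A-(q-1)I)\chi_Y=\alpha j+u$; expanding $(A+(q^2+1)I)(\alpha j+u)$ and using $k+(q^2+1)=(q+1)(q^2+1)$ together with $\alpha(q+1)=|Y|$, the all-ones contribution cancels against the right-hand side $(q^2+1)|Y|\,j$, leaving $Au+(q^2+1)u=0$. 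In other words, $u$ is either zero or an eigenvector of $A$ for the eigenvalue $-(q^2+1)$.

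The main obstacle, and the crux of the whole argument, is to rule out the second possibility, and this is where the geometry of the deleted quadric enters. Since $u$ is supported entirely on $W$, writing $A$ in block form for the partition $\calQ=X\cup W$ and reading off the $W$-block of $Au=-(q^2+1)u$ gives $Cu_W=-(q^2+1)u_W$, where $u_W$ is the restriction of $u$ to $W$ and $C$ is the collinearity matrix of $W=\mathsf{Q}(4,q)$. But $\mathsf{Q}(4,q)$ is a generalised quadrangle whose point graph is strongly regular with eigenvalues $q(q+1)$, $q-1$ and $-(q+1)$, none of which equals $-(q^2+1)$ when $q>2$. Hence $-(q^2+1)$ is not an eigenvalue of $C$, which forces $u_W=0$ and therefore $u=0$. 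This yields $N(w)=\alpha$ for every $w\in W$, so the equitable equation holds on all of $\calQ$ and $Y$ is a tight set. I expect the only delicate bookkeeping to be the verification that $\alpha$ is simultaneously the $R_3$-parameter on $X$ and the tight-set parameter on $\calQ$, and the clean identification of the $W$-block of $A$ with the collinearity graph of the subquadrangle $\mathsf{Q}(4,q)$.
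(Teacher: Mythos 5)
Your proof is correct, but it takes a genuinely different route from the paper's. The paper's argument is shorter and purely combinatorial: for each point $p\in H\cap\calQ$, the set $\F_p$ of points of $X$ collinear to $p$ is a type-4 intriguing set of the scheme (Example \ref{examplecollineartopoint}), so the orthogonality relation for intriguing sets belonging to different eigenspaces (Result \ref{Bart2}) gives $|Y\cap\F_p|=|Y|\,|\F_p|/|X|=|Y|/(q+1)$; since $|Y\cap\F_p|$ is exactly the number of points of $Y$ collinear to $p$, the equitable condition extends from $X$ to the deleted points at once. Your spectral completion argument --- the defect vector $u$ supported on $W=H\cap\calQ$ must satisfy $Au=-(q^2+1)u$ by the strongly regular identity $(A-(q-1)I)(A+(q^2+1)I)=(q^2+1)J$, and then $u=0$ because $-(q^2+1)$ is not in the spectrum of the $\mathsf{Q}(4,q)$ point graph --- avoids both Result \ref{Bart2} and the type-4 example, at the cost of invoking the spectra of the two classical graphs; your numerics all check out ($\alpha=|Y|/(q+1)$ on both sides, $k+q^2+1=(q+1)(q^2+1)$, and the constant $(k-(q-1))(k+(q^2+1))/|\calQ|=q^2+1$). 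It is also the more portable technique: it shows that any partially equitable set extends across a deleted induced subgraph whenever that subgraph's spectrum avoids the complementary eigenvalue, with no need to exhibit explicit intriguing sets meeting every deleted point. One small repair: both proofs rest on identifying $R_3$ with $\calQ$-collinearity on $X$, and your justification of this is not quite right --- that conjugate points are never collinear only rules out collinear pairs lying in $R_4$; what is actually needed is that $u\sim v$ forces $u\not\sim v^\sigma$, which holds because otherwise $u^\perp$ would contain the line $vv^\sigma$, hence its point $H^\perp$, putting $u\in H$, a contradiction for $u\in X$.
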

\begin{proof}
Let $p$ be a point of $H\cap\calQ$. We have seen in Example \ref{examplecollineartopoint} that the set $\F$ consisting of the $q^2(q-1)$ points of $X$ that are in $\calQ$ collinear to $p$ is an intriguing set of the association scheme with $\chi_\F\in V_0\oplus V_4$. Applying Result \ref{Bart2} to the intriguing sets $Y$ and $\F$ of $(X,R_3)$, it follows that
\[
|Y\cap\F|=\frac{|Y|\cdot|\F|}{|X|}=\frac{|Y|}{q+1}.
\]
Hence $p$ is collinear to exactly $|Y|/(q+1)$ points of $Y$. Since this is true for all points of $H\cap\calQ$ and since the points of $X\setminus Y$ are collinear to the same number $|Y|/(q+1)$ points of $Y$, it follows that $Y$ is an intriguing set of the collinearity graph of $\mathsf{Q}^-(5,q)$ for the eigenvalue $q-1$. Hence $Y$ is a tight set of $\mathsf{Q}^-(5,q)$; see Example \ref{exampleQminus5q}.
\end{proof}

\begin{remark} The algebraic reason behind this phenomenon is the following.
If $\calP$ is the set of points of $\mathsf{Q}^-(5,q)$, then we have the following decomposition into eigenspaces of the collinearity relation:
\begin{center}
\begin{tabular}{c|c}
\toprule
$\mathbb{C}\calP=\langle \chi_{\calP}\rangle\oplus V^+\oplus V^-$&
\begin{tabular}{cccc}
Eigenspace & $\langle \chi_{\calP}\rangle$ & $V^+$ & $V^-$\\
Dimension & 1& $q^2(q^2+1)$ &  $q(q^2-q+1)$\\
\end{tabular}\\
\bottomrule
\end{tabular}
\end{center}
Whereas for the association scheme on $X$, we have
\begin{center}
\setlength{\tabcolsep}{4pt}
\begin{tabular}{c|c}
\toprule
$\mathbb{C}X=\langle \chi_{X}\rangle\oplus V_1\oplus V_2\oplus V_3 \oplus V_4$&
\begin{tabular}{cccccccc}
Eigenspace & $\langle \chi_{X}\rangle$ & $V_1$ & $V_2$ & $V_3$& $V_4$\\
Dimension &1 & $\frac{{q\left( q-1\right) }^{2}}{2}$ & $\frac{\left( q-2\right) \left( q+1\right) \left( {q}^{2}+1\right) }{2}$ & $\frac{\left( q-1\right) q\left( {q}^{2}+1\right) }{2}$ & $\frac{q\left( {q}^{2}+1\right) }{2}$
\end{tabular}\\
\bottomrule
\end{tabular}
\end{center}
We have two linear maps: (1) the natural inclusion map $\iota\colon \mathbb{C}X\to \mathbb{C}\calP$; (2) the projection map
$\rho\colon\mathbb{C}\calP\to \mathbb{C}X$ which maps $\sum_{x\in \calP} \alpha_x \chi_x$ to  $\sum_{x\in X} \alpha_x \chi_x$.
Note that $\iota$ is injective, $\rho$ is surjective, and $\rho\circ\iota$ is the identity on $\mathbb{C}X$.
The kernel of $\rho$ is clearly $K:=\langle \chi_h\colon h\in H\cap\calP\rangle$. Let $S$ be the $|X|\times |\calP|$ inclusion matrix
for $X$ in $\mathcal{P}$.
Let $(A_0,A_1,A_2,A_3,A_4)$ be the adjacency matrices for the association scheme on $X$, and
let $(B_0,B_1,B_2)$ be the adjacency matrices for the association scheme on $\mathcal{P}$, where
$B_1$ represents the collinearity relation. So $SB_0S^\top =A_0$, $SB_1S^\top =A_3$, and $SB_2S^\top=A_1+A_2+A_4$.

From the matrix of dual eigenvalues\footnote{which is
$\begin{bmatrix}
 1 & q^2(q^2+1) & q (q^2-q+1) \\
 1 & q(q-1) & -(q^2-q+1) \\
 1 & -\tfrac{1}{q}(q^2+1) & \tfrac{1}{q}(q^2-q+1) \\
\end{bmatrix}$
} for the association scheme on $\mathbb{C}\mathcal{P}$, we have
\[
(q+1)^2 E^-=  q B_0 -B_1+\tfrac{1}{q} B_2
\]
and hence
\begin{align*}
SE^-S^\top=&\frac{1}{(q+1)^2}\left(q A_0-A_3+\tfrac{1}{q}(A_1+A_2+A_4)\right)\\
=&\frac{1}{q^2(q^2-1)}\left(\frac{q(q-1)^2}{2}A_0+\frac{q(q-1)}{2}A_1-\frac{q(q-1)}{2}A_3 -\frac{q(q-1)^2}{2}A_4\right)\\
&+\frac{1}{ q^2 (q+1)^2}\left(\frac{q(q^2+1)}{2}A_0-\frac{q(q-1)}{2}A_1+qA_2-\frac{q(q-1)}{2}A_3+\frac{q(q^2+1)}{2}A_4\right)\\
=&E_1+\tfrac{q-1}{ q+1}E_4.
\end{align*}

Therefore, $\rho(V^-)$ contains the rowspace of $E_1+\tfrac{q-1}{ q+1}E_4$. Since $E_1E_4=0$, the rowspace
of $E_1+\tfrac{q-1}{ q+1}E_4$ is the sum of the rowspaces of $E_1$ and $E_4$; that is, $V_1\perp V_4$. Hence $\rho(V^-)$ contains
$V_1\perp V_4$. As $\dim(V^-)=\dim(V_1)+\dim(V_4)$, it follows that $\rho$ restricted to $V^-$ is a bijective map from $V^-$
onto $V_1\perp V_4$.

For each $v\in \langle \chi_{X}\rangle\perp V_2\perp V_3$, we have
\begin{align*}
(\iota(v) E^-)(\iota(v) E^-)^\top&= \iota(v) (E^-)^2 (\iota(v))^\top= \iota(v) E^- (\iota(v))^\top\\
&=v S E^-S^\top v^\top=v (E_1+\tfrac{q-1}{ q+1} E_4)v^\top\\
&=0.
\end{align*}
Therefore, $\iota(v) E^-=0$ and consequently
$\iota(v)\in \langle \chi_\calP\rangle \perp V^+$. Hence $\iota$ maps $\langle \chi_{X}\rangle\perp V_2\perp V_3$ into
$\langle \chi_\calP\rangle \perp V^+$, which affirms Lemma \ref{tightset}.
So we have the following diagram:
\begin{center}
\definecolor{qqqqff}{rgb}{0.,0.,1.}
\definecolor{ffqqqq}{rgb}{1.,0.,0.}
\begin{tikzpicture}[line cap=round,line join=round,x=1.0cm,y=1.0cm,scale=0.75]
\draw [rotate around={-89.07595464722672:(-0.37,1.68)},line width=1.pt] (-0.37,1.68) ellipse (2.600270691056427cm and 1.8168400223374284cm);
\draw [rotate around={-88.95074405911865:(6.69,2.39)},line width=1.pt] (6.69,2.39) ellipse (3.782757374478706cm and 2.6179865076377014cm);
\draw [shift={(-0.4195190875247509,4.619064395562453)},line width=1.pt]  plot[domain=3.8053015626212314:5.65413241769698,variable=\t]({1.*1.059244253572264*cos(\t r)+0.*1.059244253572264*sin(\t r)},{0.*1.059244253572264*cos(\t r)+1.*1.059244253572264*sin(\t r)});
\draw [shift={(-0.3307875948613303,4.314028114376794)},line width=1.pt,dotted]  plot[domain=3.768181030912218:5.601482666649646,variable=\t]({1.*1.874674167500628*cos(\t r)+0.*1.874674167500628*sin(\t r)},{0.*1.874674167500628*cos(\t r)+1.*1.874674167500628*sin(\t r)});
\draw [shift={(-0.3101343264651509,3.5830193063298212)},line width=1.pt]  plot[domain=3.777278968516448:5.570656425145259,variable=\t]({1.*2.287932882140286*cos(\t r)+0.*2.287932882140286*sin(\t r)},{0.*2.287932882140286*cos(\t r)+1.*2.287932882140286*sin(\t r)});
\draw [shift={(-0.33665675464156547,2.4870421155668465)},line width=1.pt,dotted]  plot[domain=3.9051834010555897:5.478550662404903,variable=\t]({1.*2.410200261863746*cos(\t r)+0.*2.410200261863746*sin(\t r)},{0.*2.410200261863746*cos(\t r)+1.*2.410200261863746*sin(\t r)});
\draw [shift={(6.615840236808241,6.572643646811485)},line width=1.pt]  plot[domain=3.7687178350780366:5.690727620233133,variable=\t]({1.*1.652820231454058*cos(\t r)+0.*1.652820231454058*sin(\t r)},{0.*1.652820231454058*cos(\t r)+1.*1.652820231454058*sin(\t r)});
\draw [shift={(6.621813513362459,5.379109542870281)},line width=1.pt]  plot[domain=3.778466454948137:5.694438151857333,variable=\t]({1.*3.0196701970708033*cos(\t r)+0.*3.0196701970708033*sin(\t r)},{0.*3.0196701970708033*cos(\t r)+1.*3.0196701970708033*sin(\t r)});
\draw (-1.22,3.58) node[anchor=north west] {$V_1$};
\draw (-1.24,1.18) node[anchor=north west] {$V_2$};
\draw (-1.18,0.08) node[anchor=north west] {$V_3$};
\draw (-1.24,2.44) node[anchor=north west] {$V_4$};
\draw (7.62,4.52) node[anchor=north west] {$V^-$};
\draw (7.72,1.24) node[anchor=north west] {$V^+$};
\draw [shift={(-1.555,1.78)},line width=1.pt]  plot[domain=0.14290410360914918:0.7219114475319262,variable=\t]({1.*3.5107869488193066*cos(\t r)+0.*3.5107869488193066*sin(\t r)},{0.*3.5107869488193066*cos(\t r)+1.*3.5107869488193066*sin(\t r)});
\draw [shift={(-2.604090909090911,1.7977272727272737)},line width=1.pt]  plot[domain=5.651388498502753:6.252342835247914,variable=\t]({1.*4.466215002245303*cos(\t r)+0.*4.466215002245303*sin(\t r)},{0.*4.466215002245303*cos(\t r)+1.*4.466215002245303*sin(\t r)});
\draw [rotate around={-84.69907348347446:(5.97,0.63)},line width=0.4pt,color=ffqqqq,fill=ffqqqq,fill opacity=0.07000000029802322] (5.97,0.63) ellipse (1.4646315666641523cm and 1.0936844270945356cm);
\draw [->,line width=1.pt,color=ffqqqq] (1.72,0.68) -- (5.44,0.66);
\draw [->,line width=1.pt,color=qqqqff] (5.2775221905205525,5.602736679889115) -- (1.08,4.1);
\draw [->,line width=1.pt,color=qqqqff] (4.194123124779036,3.5833569217508536) -- (1.92,2.28);
\draw [->,line width=1.pt,color=ffqqqq] (1.86,1.66) -- (5.88,1.6);
\draw [->,line width=1.pt,color=ffqqqq] (1.,-0.84) -- (6.06,-0.34);
\draw[color=ffqqqq] (3.5,0.13) node {$\iota$};
\draw[color=qqqqff] (3.2,4.27) node {$\rho$};
\end{tikzpicture}
\end{center}
\end{remark}

Now we investigate when $Y$ is an intriguing set of the association scheme $X$, that is when $\chi_Y\in V_0\perp V_2$ or $\chi_Y\in V_0\perp V_3$. We already know that $\chi_Y\in V_0\oplus V_2$ implies $Y=Y^\sigma$ and that $\chi_Y\in V_0\oplus V_3$ implies $Y\cap Y^\sigma=\varnothing$. The following observation is crucial in proving that the converse is also true.

\begin{lemma}\label{sigmaclosed}
We have $\erz{\chi_p - \chi_{p^\sigma}\mid p\in X}=V_1\oplus V_3$ and
$\erz{\chi_p + \chi_{p^\sigma}\mid p\in X}=V_0\oplus V_2\oplus V_4$.
\end{lemma}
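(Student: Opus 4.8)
The crucial observation is that the relation $R_4$ is exactly the graph of the involution $\sigma$, so its adjacency matrix $A_4$ is the permutation matrix representing $\sigma$; concretely, $\chi_p A_4 = \chi_{p^\sigma}$ for every $p\in X$. Since $\sigma$ is an involution, $R_4$ is symmetric and $A_4^2=I$, so $A_4$ is a diagonalisable symmetric matrix with all eigenvalues in $\{+1,-1\}$. My plan is to describe the two eigenspaces of $A_4$ in two independent ways and then match them.

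First I would read off the eigenvalues from the scheme side. By the numbering convention $A_jv=P_{ij}v$ for $v\in V_i$, the eigenvalue of $A_4$ on $V_i$ is the $(i,4)$-entry of the matrix of eigenvalues $P$. Inspecting its last column gives $P_{i4}=+1$ for $i\in\{0,2,4\}$ and $P_{i4}=-1$ for $i\in\{1,3\}$. Because $V_0,\dots,V_4$ are the common eigenspaces of the scheme and together span $\mathbb{R}^{|X|}$, the $(+1)$-eigenspace of $A_4$ is exactly $V_0\oplus V_2\oplus V_4$ and the $(-1)$-eigenspace of $A_4$ is exactly $V_1\oplus V_3$.

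Next I would describe the same two eigenspaces from the characteristic-vector side. Writing $v=\sum_{p\in X}a_p\chi_p$ and using $\chi_pA_4=\chi_{p^\sigma}$, one computes $vA_4=\sum_{p}a_{p^\sigma}\chi_p$, so $vA_4=\lambda v$ holds precisely when $a_{p^\sigma}=\lambda a_p$ for all $p$. Since $\sigma$ is fixed-point-free, grouping the coordinates into conjugate pairs $\{p,p^\sigma\}$ and choosing one representative from each pair shows at once that the $(-1)$-eigenspace of $A_4$ is spanned by the differences $\chi_p-\chi_{p^\sigma}$ and that the $(+1)$-eigenspace is spanned by the sums $\chi_p+\chi_{p^\sigma}$. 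Matching this with the previous paragraph yields $\erz{\chi_p-\chi_{p^\sigma}\mid p\in X}=V_1\oplus V_3$ and $\erz{\chi_p+\chi_{p^\sigma}\mid p\in X}=V_0\oplus V_2\oplus V_4$.

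I do not expect any genuine obstacle here; the two points deserving care are that $A_4$ really is the permutation matrix of $\sigma$ (immediate from the definition of $R_4$ together with $\sigma^2=\mathrm{id}$) and that inspecting a single column of $P$ determines the eigenspaces completely, which is legitimate only because the $V_i$ exhaust the ambient space. As a consistency check I would confirm, using the dimensions listed in the remark, that $\dim(V_1\oplus V_3)=\dim(V_0\oplus V_2\oplus V_4)=\tfrac12|X|$, which agrees with the number of conjugate pairs and hence with the dimensions of the two spans.
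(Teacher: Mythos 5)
Your proof is correct, and it reaches the result by a genuinely different route than the paper. The paper never isolates $A_4$ as the permutation matrix of $\sigma$; instead it works with the minimal idempotents $E_j=\frac{1}{|X|}\sum_i Q_{ij}A_i$, combining the intertwining identity $A_i\chi_{p^\sigma}=A_{4-i}\chi_p$ (valid for every $i$, because composing a relation with $\sigma$ swaps $R_i$ and $R_{4-i}$) with the sign symmetry $Q_{ij}=(-1)^jQ_{4-i,j}$ of the dual eigenvalue matrix, to deduce $E_j\chi_{p^\sigma}=(-1)^jE_j\chi_p$; this gives the containments $\erz{\chi_p-\chi_{p^\sigma}\mid p\in X}\subseteq V_1\oplus V_3$ and $\erz{\chi_p+\chi_{p^\sigma}\mid p\in X}\subseteq V_0\oplus V_2\oplus V_4$, and equality follows since the $\chi_p$ span the whole space --- the same finishing move as yours. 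The content of the paper's identity $E_j\chi_{p^\sigma}=(-1)^jE_j\chi_p$ is exactly your statement that $A_4$ acts as $(-1)^j$ on $V_j$, i.e.\ your reading of the last column of $P$; but your derivation is lighter, since you only inspect one column of $P$ and use $A_4^2=I$ together with the elementary description of the $\pm1$-eigenspaces of a fixed-point-free involution, whereas the paper must verify a symmetry across the entire matrix $Q$ and the relation swap for all five relations. What the paper's method buys in exchange is generality of form: the argument via $E_j$ and the induced permutation of relations would apply verbatim in a scheme where the automorphism's graph is not itself one of the relations; here $R_4$ happens to be the graph of $\sigma$, which is precisely what makes your shortcut available. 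Both the key facts you rely on are sound: $(A_4)_{uv}=1$ iff $v=u^\sigma$ by the definition of $R_4$ and $\sigma^2=\mathrm{id}$, and the exhaustion argument identifying the eigenspaces of $A_4$ with $V_0\oplus V_2\oplus V_4$ and $V_1\oplus V_3$ is legitimate exactly for the reason you state, namely that the $V_i$ together span $\mathbb{R}^{|X|}$; your dimension check ($\dim(V_1)+\dim(V_3)=\dim(V_0)+\dim(V_2)+\dim(V_4)=\tfrac12 q^2(q^2-1)$) is also consistent with the table in the paper.
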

\begin{proof}
We number the rows and columns of $Q$ from $0$ to $4$. Inspection of $Q$ shows that $Q_{ij}=(-1)^jQ_{4-i,j}$ for all $i,j=0,\dots,4$. The definition of the relations $R_i$ shows that $A_i\chi_{p^\sigma}=A_{4-i}\chi_p$. It follows for each $j\in\{0,1,2,3,4\}$ that
\begin{align*}
E_j\chi_{p^\sigma}=\frac{1}{|X|}\sum_iQ_{ij}A_i\chi_{p^\sigma}
=\frac{1}{|X|}\sum_i(-1)^jQ_{ij}A_{4-i}\chi_p=(-1)^jE_j\chi_p.
\end{align*}
Hence, for each $p\in X$ we have $E_j(\chi_p-\chi_{p^\sigma})=0$ for $j=0,2,4$ and $E_j(\chi_p+\chi_{p^\sigma})=0$ for $j=1,3$. Hence
$\erz{\chi_p - \chi_{p^\sigma}\mid p\in X}\subseteq V_1\oplus V_3$ and
$\erz{\chi_p + \chi_{p^\sigma}\mid p\in X}\subseteq V_0\oplus V_2\oplus V_4$. Since $\{\chi_p\mid p\in X\}$ forms a basis of $V_0\oplus V_1\oplus V_2\oplus V_3\oplus V_4$, we have equality.
\end{proof}

\begin{lemma}\label{sigmaclosed2}
If $S\subseteq X$, then $S=S^\sigma$ if and only if $\chi_S\in V_0\oplus V_2\oplus V_4$.
\end{lemma}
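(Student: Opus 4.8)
The plan is to deduce both implications directly from Lemma \ref{sigmaclosed}, which already identifies the spans of the conjugate-symmetric and conjugate-antisymmetric vectors. The only structural facts about $\sigma$ that I will need are that it is an involution acting without fixed points on $X$, both recorded in the construction of the scheme, and the elementary observation that two subsets of $X$ coincide if and only if their characteristic vectors coincide.

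First I would record the two bookkeeping identities $\chi_S+\chi_{S^\sigma}=\sum_{p\in S}(\chi_p+\chi_{p^\sigma})$ and $\chi_S-\chi_{S^\sigma}=\sum_{p\in S}(\chi_p-\chi_{p^\sigma})$, both of which hold simply because $\chi_{S^\sigma}=\sum_{p\in S}\chi_{p^\sigma}$. By Lemma \ref{sigmaclosed} the first sum lies in $V_0\oplus V_2\oplus V_4$ and the second in $V_1\oplus V_3$, and this holds for \emph{every} $S\subseteq X$.

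For the forward implication, if $S=S^\sigma$ then $\chi_{S^\sigma}=\chi_S$, so $2\chi_S=\chi_S+\chi_{S^\sigma}\in V_0\oplus V_2\oplus V_4$, giving $\chi_S\in V_0\oplus V_2\oplus V_4$. For the converse I would exploit that $\mathbb{R}^n=(V_0\oplus V_2\oplus V_4)\perp(V_1\oplus V_3)$ is an orthogonal direct sum, being a regrouping of the eigenspace decomposition of the symmetric matrices $A_i$. Thus the splitting $\chi_S=\tfrac12(\chi_S+\chi_{S^\sigma})+\tfrac12(\chi_S-\chi_{S^\sigma})$ is precisely the decomposition of $\chi_S$ into its components in these two complementary subspaces. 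If $\chi_S\in V_0\oplus V_2\oplus V_4$, uniqueness forces the $(V_1\oplus V_3)$-component $\tfrac12(\chi_S-\chi_{S^\sigma})$ to vanish, whence $\chi_{S^\sigma}=\chi_S$ and therefore $S=S^\sigma$.

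I do not expect a genuine obstacle: essentially all of the content is already carried by Lemma \ref{sigmaclosed}, and what remains is the identities above together with the orthogonality of the eigenspaces. The single point I would state with care is the passage from the vector equality $\chi_{S^\sigma}=\chi_S$ back to the set equality $S=S^\sigma$, which is immediate but worth making explicit so that the characterisation is read at the level of subsets of $X$ as the lemma intends.
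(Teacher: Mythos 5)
Your proof is correct, and it rests on the same key lemma as the paper's, namely Lemma \ref{sigmaclosed}; the difference lies only in how that lemma is applied. The paper's proof is an orthogonality test: it observes that $S=S^\sigma$ holds if and only if $(\chi_p-\chi_{p^\sigma})\cdot\chi_S=0$ for all $p$, and then uses the fact that the vectors $\chi_p-\chi_{p^\sigma}$ span $V_1\oplus V_3$, so that this vanishing is equivalent to $\chi_S\in (V_1\oplus V_3)^\perp=V_0\oplus V_2\oplus V_4$. You instead split $\chi_S=\tfrac12(\chi_S+\chi_{S^\sigma})+\tfrac12(\chi_S-\chi_{S^\sigma})$ and invoke uniqueness of components with respect to the direct sum $(V_0\oplus V_2\oplus V_4)\oplus(V_1\oplus V_3)$. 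Two remarks on what each route buys. First, your argument uses only the containments $\erz{\chi_p+\chi_{p^\sigma}\mid p\in X}\subseteq V_0\oplus V_2\oplus V_4$ and $\erz{\chi_p-\chi_{p^\sigma}\mid p\in X}\subseteq V_1\oplus V_3$ (the easy half of Lemma \ref{sigmaclosed}), together with mere directness of the sum; the paper's forward implication genuinely needs the spanning statement, i.e.\ the equality in Lemma \ref{sigmaclosed}, since orthogonality of $\chi_S$ to a proper subspace of $V_1\oplus V_3$ would not force $\chi_S$ into $V_0\oplus V_2\oplus V_4$. In this sense your argument is marginally more economical, though the paper's is shorter to write once Lemma \ref{sigmaclosed} is in hand. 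One cosmetic point: the fixed-point-freeness of $\sigma$ that you cite is never used; only that $\sigma$ is an involution (indeed, a bijection) of $X$ enters, via $\chi_{S^\sigma}=\sum_{p\in S}\chi_{p^\sigma}$.
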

\begin{proof}
We have $S=S^\sigma$ if and only if
\[
(\chi_p-\chi_{p^\sigma})\cdot \chi_S=0
\]
for all $p\in\calP$. Therefore the statement follows from Lemma \ref{sigmaclosed}.
\end{proof}

\begin{theorem}\label{V2V3characterization}
Let $Y$ be a proper nonempty subset of $X$ with $\chi_Y\in V_0\oplus V_2\oplus V_3$, that is, $Y$ is an intriguing set of $(X,R_3)$. Then $\chi_Y\in V_0\oplus V_2$ if and only if $Y^\sigma=Y$, and $\chi_Y\in V_0\oplus V_3$ if and only if $Y$ contains exactly one of every two conjugate points of $X$.
\end{theorem}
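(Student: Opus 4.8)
The plan is to notice first that two of the four implications are already available. If $\chi_Y\in V_0\oplus V_2$ then $Y^\sigma=Y$ by Theorem~\ref{hemi}(b) (the case $i=2$), and if $\chi_Y\in V_0\oplus V_3$ then $Y$ meets every conjugate pair $\{p,p^\sigma\}$ in exactly one point by Theorem~\ref{hemi}(a) (the case $i=3$). So the genuine content lies in the two converse implications, and I would handle both by the same device: translate the geometric hypothesis into the statement that $\chi_Y$ lies in a particular sum of $\sigma$-eigenspaces, and then intersect that with the standing hypothesis $\chi_Y\in V_0\oplus V_2\oplus V_3$. Because $V_0,\dots,V_4$ are mutually orthogonal, the intersection of two such sums is the sum over the common indices; in particular $(V_0\oplus V_2\oplus V_3)\cap(V_0\oplus V_2\oplus V_4)=V_0\oplus V_2$ and $(V_0\oplus V_2\oplus V_3)\cap(V_0\oplus V_1\oplus V_3)=V_0\oplus V_3$.

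For the first converse I would assume $Y^\sigma=Y$. By Lemma~\ref{sigmaclosed2} this is equivalent to $\chi_Y\in V_0\oplus V_2\oplus V_4$, and intersecting with the hypothesis $\chi_Y\in V_0\oplus V_2\oplus V_3$ immediately yields $\chi_Y\in V_0\oplus V_2$, as required.

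For the second converse I would assume that $Y$ meets every pair $\{p,p^\sigma\}$ in exactly one point. Then the point of each pair that is not in $Y$ is the $\sigma$-image of the point that is in $Y$, so $X\setminus Y=Y^\sigma$; equivalently $\chi_X-2\chi_Y=\chi_{Y^\sigma}-\chi_Y=\sum_{y\in Y}(\chi_{y^\sigma}-\chi_y)$. By Lemma~\ref{sigmaclosed} this vector lies in $V_1\oplus V_3$, so $\chi_Y=\tfrac12\bigl(\chi_X-(\chi_X-2\chi_Y)\bigr)\in V_0\oplus V_1\oplus V_3$. Intersecting with the hypothesis then gives $\chi_Y\in V_0\oplus V_3$.

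The calculations are short, and with Lemmas~\ref{sigmaclosed} and~\ref{sigmaclosed2} in hand there is no serious obstacle; the one point that needs care is the correct encoding of the two geometric conditions. The condition $Y^\sigma=Y$ is delivered directly by Lemma~\ref{sigmaclosed2}, whereas the ``exactly one per pair'' condition must first be rewritten as the partition identity $X\setminus Y=Y^\sigma$ and then recognised, via $\chi_{Y^\sigma}-\chi_Y=\sum_{y\in Y}(\chi_{y^\sigma}-\chi_y)$, as producing a vector in the $(-1)$-eigenspace $V_1\oplus V_3$ of the involution induced by $\sigma$. I would verify carefully that $X\setminus Y=Y^\sigma$ is genuinely equivalent to the ``exactly one'' condition (it is: exactly one of $p,p^\sigma$ lies in $Y$ for every $p\in X$ precisely when $Y$ and $Y^\sigma$ partition $X$), since this identification is what powers the whole argument.
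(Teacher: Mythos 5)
Your proposal is correct and takes essentially the same route as the paper: forward implications from Theorem~\ref{hemi}, the first converse via Lemma~\ref{sigmaclosed2} followed by intersecting with $V_0\oplus V_2\oplus V_3$, and the second converse by placing $\chi_X-2\chi_Y$ in $V_1\oplus V_3$ using Lemma~\ref{sigmaclosed}. The only cosmetic difference is in the last step: the paper deduces $2\chi_Y-j\in V_1\oplus V_3$ from the orthogonality relations $(\chi_p+\chi_{p^\sigma})\cdot(2\chi_Y-j)=0$ together with the spanning statement of Lemma~\ref{sigmaclosed}, whereas you write $\chi_X-2\chi_Y=\sum_{y\in Y}(\chi_{y^\sigma}-\chi_y)$ directly, which uses only the containment direction of that lemma.
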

\begin{proof}
If we assume that $\chi_Y\in V_0\oplus V_2$ or $\chi_Y\in V_0\oplus V_3$, then Theorem \ref{hemi} proves the statement. If we assume that $Y^\sigma=Y$, then $\chi_Y\in V_0\oplus V_2\oplus V_4$ by Lemma \ref{sigmaclosed2} and hence $\chi_Y\in V_0\oplus V_2$. Finally assume that $Y$ contains one point of any pair of conjugate points. Then $(\chi_p+\chi_{p^\sigma})\cdot(2\chi_Y-j)=0$ for every point $p\in X$. Therefore Lemma \ref{sigmaclosed} shows that $2\chi_Y-j\in V_1\oplus V_3$. Hence $2\chi_Y-j\in V_3$, which shows that $\chi_Y\in V_0\oplus V_3$.
\end{proof}

\subsection*{Intriguing sets of type $3$} 

We construct an intriguing set $Y$ of the association scheme $X$ with $\chi_Y\in V_0\oplus V_3$. Recall that $X=\calQ\setminus (H\cap\calQ)$ where $\calQ$ is an elliptic quadric $\calQ$ in $\PG(5,q)$ and $H$ is a non-tangent hyperplane of $\PG(5,q)$, that is a hyperplane that meets $\calQ$ in a parabolic quadric $\mathsf{Q}(4,q)$.

\begin{example}[Intriguing sets of type $3$, $q$ even]\label{example:type3_qeven}\ \\
Assume $q$ is even and $q>2$. Consider a solid $S$ of $H$ such that $S\cap \calQ$ is a hyperbolic quadric. Then $S^\perp$ is an elliptic line of $\PG(5,q)$ and $H^\perp$ is a point on this line.
Since $q$ is even, $H^\perp\in H$ and $\sigma$ has $q/2$ orbits of length two in its action on the remaining $q$ points of $S^\perp$.

Consider $p\in S^\perp$ with $p\ne H^\perp$. Then $p^\perp$ is a non-tangent hyperplane to $\calQ$ and hence $p^\perp\cap \calQ$ is a parabolic quadric $Q(4,q)$ of $\calQ$. This implies that $p^\perp\cap \calQ$ is a tight set of $\calQ$. As $p\in S^\perp$ we have $p^\perp\cap H=S$. As $S\cap \calQ$ is a hyperbolic quadric $\mathsf{Q}^+(3,q)$, which is the union of its lines of each of its reguli, the set $S\cap \calQ$ is also a tight set of $\calQ$ and it is contained in $p^\perp\cap \calQ$. Hence $T_p:=(p^\perp\setminus S)\cap \calQ$ is a tight set of $\calQ$. For different points $p$ the different hyperplanes $p^\perp$ meet within $S$ and hence the corresponding tight sets $T_p$ are disjoint. As $\sigma$ has order two and $H^\perp$ is the only point of $S^\perp$ fixed by $\sigma$, we can partition the set of $q$ points of $S^\perp$ into two sets $M$ and $M'$ of cardinality $q/2$ such that $M^\sigma=M'$. Then the union $T_M$ of the tight sets $T_p$ with $p\in M$ is a tight set of $\calQ$ and the union $T_{M'}$ of the tight sets $T_p$ with $p\in M'$ is a tight set of $\calQ$. Moreover every point of $X$ is contained in exactly one of the two sets $T_M$ and $T_{M'}$. Since $M^\sigma=M'$, we have $(T_M)^\sigma=T_{M'}$. Theorem \ref{V2V3characterization} now shows that $T_M$ is an intriguing set of $X$ with $\chi_{T_M}\in V_0\oplus V_3$.
\end{example}

\begin{example}[Intriguing sets of type $3$, $q$ odd]\label{example:type3_qodd}
Here we assume that $q$ is odd, and hence $H^\perp\notin H$. Let $m$ be a line of $H\cap \calQ$. Then $H^\perp$ and $m$ generate a plane $\pi$ that meets the quadric $\calQ:=\calQ$ only in the line $m$. Set $z=H^\perp$ and let $u$ be a point of $m$. Then $(zu)^\perp$ is a 3-space that is contained in $H$ and meets $\calQ$ in a cone $C$ with vertex $u$ over a conic. In particular $|C|=q^2+q+1$.

Let $S_1$ be any set consisting of $\frac12(q-1)$ points of the line $uz$ such that $u,z\notin S_1$, let $S_2$ be any set consisting of $\frac12(q-1)$ lines of $\pi$ on $u$ such that $uz,m\notin S_2$, and let $T$ be the set consisting of all points of $\calQ\setminus H$ that are perpendicular to a point in $S_1$ or to a line in $S_2$. For $y\in S_1$, the hyperplane $y^\perp$ meets $\calQ$ in a quadric $\mathsf{Q}(4,q)$, and $y^\perp\cap H$ meets $\calQ$ in the cone $C$. For $\ell\in S_2$, the solid $\ell^\perp$ meets $\calQ$ in a cone with vertex $u$ over a conic, and $\ell^\perp\cap H$ meets $\calQ$ in the line $m$. It follows that
\[
|T|=\frac{q-1}{2}(q^3+q^2+q+1-|C|)+\frac{q-1}{2}(q^2+q+1-|m|)=\frac12q^2(q^2-1).
\]
Let $p$ be a point of $X$. We want to determine $|p^\perp\cap T|$. First notice that $p^\perp$ meets the line $uz$ in exactly one point and this point is different from $z$. Also, if $p^\perp\cap uz$ is the point $u$, then $p^\perp\cap \pi$ is one of the lines of $\pi$ on $u$ that is different from $m$ and $uz$. For $y\in S_1$ we have $y^\perp\cap H\cap \calQ=C$ and thus
\[
|p^\perp\cap y^\perp\cap T|=|p^\perp\cap y^\perp\cap \calQ|-|p^\perp\cap C|.
\]
For $\ell\in S_2$ we have $\ell^\perp\cap H\cap \calQ=m$ and thus
\[
|p^\perp\cap \ell^\perp\cap T|=|p^\perp\cap \ell^\perp\cap \calQ|-|p^\perp\cap m|.
\]
As $|S_1|=|S_2|=\frac12(q-1)$, it follows that
\begin{align*}
|p^\perp\cap T|
&=\sum_{y\in S_1}|p^\perp\cap y^\perp\cap T|+\sum_{\ell\in S_1}|p^\perp\cap \ell^\perp\cap T|.
\\
&=\sum_{y\in S_1}|p^\perp\cap y^\perp\cap \calQ|+\sum_{\ell\in S_1}|p^\perp\cap \ell^\perp\cap \calQ|-\frac12(q-1)(|p^\perp\cap C|+|p^\perp\cap m|).
\end{align*}

\begin{description}
\item[Case 1. $p^\perp\cap uz\notin S_1\cup\{u\}$]\ \\
Then $p^\perp\cap C$ is a conic and $p^\perp\cap m$ is a single point. If $y\in S_1$, then $y^\perp\cap \calQ$ is a $\mathsf{Q}(4,q)$ not containing $p$, so $p^\perp$ meets this $\mathsf{Q}(4,q)$ in a $\mathsf{Q}^-(3,q)$. If $\ell\in S_2$, then $\ell^\perp\cap \calQ$ is a cone with vertex $u$ over a conic and since $p\notin u^\perp$, the hyperplane $p^\perp$ meets this cone in a conic. Hence
\[
|p^\perp\cap T|=|S_1|(q^2+1)+|S_2|(q+1)-\frac12(q-1)((q+1)+1)=\frac12(q-1)q^2.
\]

\item[Case 2. $p^\perp\cap uz$ is a point $y_0$ of $S_1$]\
\\
The only difference to Case 1 occurs for the point $y_0\in S_1$, namely $p^\perp\cap y_0^\perp$ is a solid that meets $\calQ$ in a cone with vertex $p$ over a conic, so $|p^\perp\cap y_0^\perp\cap \calQ|=q^2+q+1$. It follows that $|p^\perp \cap T|=\frac12(q-1)q^2+q$.

\item[Case 3. $p^\perp\cap uz=\{u\}$ and the line $p^\perp\cap \pi$ is not in $S_2$]\
\\
Then $p^\perp\cap C=p^\perp\cap m=\{u\}$. If $y\in S_1$, then $p^\perp\cap y^\perp\cap \calQ$ is a $\mathsf{Q}^-(3,q)$ as before. As $pu$ is a line of $\calQ$ we have $(pu)^\perp\cap \calQ=pu$. If $\ell\in S_2$, then $\ell$ and $p$ are not perpendicular and therefore $\ell^\perp\cap p^\perp\cap \calQ=\ell^\perp\cap (pu)^\perp\cap\calQ=\ell^\perp\cap pu=\{u\}$. Hence $|p^\perp\cap T|=\frac12(q-1)q^2$.

\item[Case 4. $p^\perp\cap uz=\{u\}$ and the line $\ell_0:=p^\perp\cap \pi$ is a line of $S_2$]\ \\
The only difference to Case 3 is that now $p^\perp\cap \ell_0^\perp$ meets $\calQ$ in the line $pu$, so that $|p^\perp\cap \ell_0^\perp\cap\calQ|=q+1$. Hence $|p^\perp\cap T|=\frac12(q-1)q^2+q$.
\end{description}

Notice that Case 2 or 4 occurs if and only if $p\in T$. Hence
\[
|p^\perp\cap T|=\begin{cases}
\frac12(q-1)q^2&\text{if }p\notin T,\\
\frac12(q-1)q^2+q&\text{if }p\in T.
\end{cases}
\]
This shows that $T$ is a tight set of the graph $(X,R_3)$. Since $\sigma$ has orbits of length two on the points not equal to $u,z$ of $uz$, and orbits of length two on the lines not equal to $m,uz$ of $\pi$ on $u$, we can choose $S_1$ and $S_2$ in such a way that $S_1^\sigma\cap S_1=\varnothing$ and $S_2^\sigma\cap S_2=\varnothing$. As the polarity of $\calQ$ commutes with $\sigma$, this implies that $T^\sigma\cap T=\varnothing$. Since $|T|=\frac12|X|$, it follows that $|T\cap\{p,p^\sigma\}|=1$ for every point $p\in X$. Therefore Theorem \ref{V2V3characterization} shows that $\chi_T\in V_0\oplus V_3$.

\end{example}

\subsection*{Intriguing sets of type $2$}

Now we focus on the intriguing sets $Y$ with $\chi_Y\in V_0\oplus V_2$. We will prove a lower bound on the size of $Y$ and provide an example that shows that the bound is sharp.

\begin{lemma}\label{perpendicularconics}
If $S$ is the union of two perpendicular conics of $\mathsf{Q}^-(5,q)$ with $S\cap H=\varnothing$, then $S^\sigma\ne S$.
\end{lemma}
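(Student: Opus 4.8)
The plan is to argue by contradiction: assume $S^\sigma=S$ and derive a contradiction from the geometry of the two planes carrying the conics. Write $S=C_1\cup C_2$, where $C_i=\calQ\cap \pi_i$ for planes $\pi_1,\pi_2$ of $\PG(5,q)$ with $\pi_2=\pi_1^\perp$. Since $C_1$ is a nondegenerate conic, the quadratic form restricts nondegenerately to $\pi_1$, so $\pi_1\cap\pi_1^\perp=\varnothing$; thus $\pi_1$ and $\pi_2$ are skew planes spanning $\PG(5,q)$, and in particular $C_1\cap C_2=\varnothing$. I would also record the two standing facts about the central collineation $\sigma$: its set of fixed points is $H$ together with the centre $z:=H^\perp$, and $x^\sigma\in\langle z,x\rangle$ for every point $x$ (since $\sigma$ fixes every line through its centre).

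The first step is to show that $\sigma$ permutes $\{C_1,C_2\}$. Because $\sigma$ stabilises $\calQ$ and is a collineation, $C_1^\sigma=\calQ\cap\pi_1^\sigma$ is again a nondegenerate conic spanning the plane $\pi_1^\sigma$, and $C_1^\sigma\subseteq S=C_1\cup C_2$. Writing $L_i:=\pi_1^\sigma\cap\pi_i$, the $q+1$ points of $C_1^\sigma$ lie in $L_1\cup L_2$. If $\pi_1^\sigma\notin\{\pi_1,\pi_2\}$, then each $L_i$ is empty, a point, or a line; as a conic meets a line in at most two points this gives $q+1\le 4$, and in the only borderline case $q=3$ both $L_1$ and $L_2$ would be lines of the plane $\pi_1^\sigma$ and hence would meet, forcing a point of $\pi_1\cap\pi_2=\varnothing$. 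Thus $\pi_1^\sigma\in\{\pi_1,\pi_2\}$, i.e.\ $\sigma$ either fixes both conics or interchanges them.

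It then remains to exclude both cases. If $C_1^\sigma=C_1$ and $C_2^\sigma=C_2$, then $\pi_1^\sigma=\pi_1$; assuming $z\notin\pi_1$, pick any $x\in\pi_1\setminus H$ (so $x\ne z$ and $x$ is not fixed by $\sigma$). The line $\langle z,x\rangle$ meets $\pi_1$ only in $x$, so $x^\sigma\in\langle z,x\rangle\cap\pi_1=\{x\}$ forces $x^\sigma=x$, a contradiction. Hence $z\in\pi_1$, and symmetrically $z\in\pi_2=\pi_1^\perp$, contradicting $\pi_1\cap\pi_1^\perp=\varnothing$. If instead $C_1^\sigma=C_2$, then $\pi_1^\sigma=\pi_2$; but $\pi_1\cap H$ has projective dimension at least $1$ and consists of fixed points of $\sigma$, so any $x\in\pi_1\cap H$ satisfies $x=x^\sigma\in\pi_1^\sigma=\pi_2$, again contradicting $\pi_1\cap\pi_2=\varnothing$. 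Either way we reach a contradiction, so $S^\sigma\ne S$.

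I expect the main obstacle to be the first step, showing that $\sigma$ cannot break a single conic across the two planes, since this is where the interaction between $\sigma$ and the quadric is most delicate and where the case $q=3$ needs the extra remark that two lines of a projective plane always meet. Once the two planes are known to be skew, which I should double-check holds precisely because the conics are nondegenerate and mutually perpendicular, both remaining cases collapse to a single incompatibility, namely $z\in\pi_1\cap\pi_1^\perp=\varnothing$ or $\pi_1\cap H\subseteq\pi_1\cap\pi_2=\varnothing$.
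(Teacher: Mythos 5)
Your proof is correct. It shares the paper's overall skeleton --- assume $S^\sigma=S$, show that $\sigma$ must stabilise the pair of planes $\{\pi_1,\pi_2\}$, then contradict $\pi_1\cap\pi_2=\varnothing$ --- but the key middle step is carried out by a genuinely different mechanism. The paper exploits the hypothesis $S\cap H=\varnothing$ structurally: since the axis $H$ is fixed pointwise, $\pi_1^\sigma$ is a plane on the line $\pi_1\cap H$, and this line is \emph{elliptic} (contains no point of $\calQ$) precisely because $C_1$ avoids $H$. Hence if $\pi_1^\sigma\ne\pi_1$, the conic $\pi_1^\sigma\cap S=(\pi_1\cap S)^\sigma$ misses $\pi_1$ and so lies in $\pi_1^\sigma\cap\pi_2$, which is at most a line (as $\pi_1\cap H\subseteq\pi_1^\sigma$ is disjoint from $\pi_2$) --- impossible for a conic. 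This one stroke forces $\pi_1^\sigma=\pi_1$ uniformly in $q$, with no interchange case left to treat; the paper then finishes exactly as in your Case A, citing rather than proving the standard fact that a $\sigma$-stabilised plane not contained in the axis contains the centre. Your route instead obtains $\pi_1^\sigma\in\{\pi_1,\pi_2\}$ by pure point counting (a conic cannot be covered by two subspaces of dimension at most one), which costs you the $q=3$ special case, and you must then dispose of the swap case $\pi_1^\sigma=\pi_2$ separately, which your pointwise-fixed-axis argument does cleanly. What each buys: the paper's version is shorter and case-free; yours is more self-contained about central collineations and isolates exactly where the geometry of $\calQ$ enters (nondegeneracy and disjointness of the conics, plus the counting bound).

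One point you should make explicit: in Case A, the existence of $x\in\pi_1\setminus H$ requires $\pi_1\not\subseteq H$, which holds because the nonempty conic $C_1\subseteq\pi_1$ is disjoint from $H$. This is in fact the \emph{only} place your argument uses the hypothesis $S\cap H=\varnothing$, and it cannot be waived: for $q$ odd, taking a conic plane $\pi_1$ through $H^\perp$ gives $\pi_2=\pi_1^\perp\subseteq H$, and then $S^\sigma=S$ does hold, so the lemma fails without that hypothesis. (By contrast, the paper consumes the hypothesis earlier, in asserting that $\pi_1\cap H$ and $\pi_2\cap H$ are elliptic lines.)
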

\begin{proof}
Assume $S$ is the union of conics $C_1=\pi_1\cap \mathsf{Q}^-(5,q)$ and $C_2=\pi_2\cap \mathsf{Q}^-(5,q)$ where $\pi_1$ and $\pi_2$ are perpendicular planes. Then $\pi_1\cap\pi_2=\varnothing$. Also $\pi_1\cap H$ and $\pi_2\cap H$ are elliptic lines. Suppose that $S^\sigma=S$. As $\pi_1^\sigma$ is a plane on $\pi_1\cap H$, then either $\pi_1^\sigma=\pi_1$ or $\pi_1^\sigma\cap S\subseteq \pi_1^\sigma\cap \pi_2$. As $\pi_1^\sigma\cap S=\pi_1^\sigma\cap S^\sigma=(\pi_1\cap S)^\sigma$ is a conic, the second case is impossible. Hence $\pi_1^\sigma=\pi_1$. As $\sigma$ is a non-trivial central collineation with centre $H^\perp$, it follows that $H^\perp\in\pi_1$. The same argument shows $H^\perp\in\pi_2$. But $\pi_1\cap\pi_2=\varnothing$, a contradiction.
\end{proof}

\begin{lemma}\label{alpha4}
If $Y$ is a non-empty subset of $X$ with $\chi_Y\in V_0\oplus V_2$, then $|Y|=\alpha(q+1)$ where $\alpha$ is an even integer such that $\alpha\ge 4$.
\end{lemma}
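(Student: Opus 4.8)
The plan is to combine three ingredients: that $Y$ is a tight set of $\mathsf{Q}^-(5,q)$, that $Y$ is $\sigma$-invariant, and that no generator of $\calQ$ can be contained in $X$.

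First, since $V_2\subseteq V_2\oplus V_3$, the hypothesis $\chi_Y\in V_0\oplus V_2$ gives $\chi_Y\in V_0\oplus V_2\oplus V_3$, so $Y$ is an intriguing set of $(X,R_3)$ for the eigenvalue $q-1$. By Lemma \ref{tightset}, $Y$ is then a tight set of $\mathsf{Q}^-(5,q)$, and Example \ref{exampleQminus5q} shows that $\alpha:=|Y|/(q+1)$ is a non-negative integer, namely the number of points of $Y$ collinear to any given point not in $Y$; as $Y\ne\varnothing$ we have $\alpha\ge 1$. Moreover, since here $i=2$, Theorem \ref{hemi} gives $Y^\sigma=Y$.

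The crux is to show that $\alpha$ is even, which for $q$ odd is not automatic since then the factor $q+1$ is itself even and carries no parity information. I would exploit that $\sigma$ fixes $H$ pointwise. Fix any point $p\in H\cap\calQ$. The proof of Lemma \ref{tightset} shows that $p$ is collinear to exactly $\alpha$ points of $Y$; write $Y_p$ for this set. Since $p$ lies on the axis $H$ of the central collineation $\sigma$, it is fixed by $\sigma$, and as $\sigma$ stabilises $\calQ$ and commutes with $\perp$, collinearity with $p$ is preserved. Hence for $u\in Y_p$ we have $u^\sigma\in Y$ (because $Y^\sigma=Y$) and $u^\sigma\sim p^\sigma=p$, so $u^\sigma\in Y_p$; thus $Y_p^\sigma=Y_p$. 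As $\sigma$ is fixed-point-free on $X\supseteq Y_p$, the set $Y_p$ is a disjoint union of conjugate pairs $\{u,u^\sigma\}$, so $\alpha=|Y_p|$ is even, and therefore $\alpha\ge 2$.

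Finally, to upgrade $\alpha\ge 2$ to $\alpha\ge 4$, I would rule out $\alpha=2$ using the classification of small tight sets \cite[Theorem 2.15]{Metsch:2016aa} quoted in Example \ref{exampleQminus5q}. Since every line of $\PG(5,q)$ meets the hyperplane $H$ in at least one point, no generator of $\calQ$ is contained in $X=\calQ\setminus(H\cap\calQ)$, so a tight set contained in $X$ can have no line components. Thus if $\alpha=2$, then $Y$ would have to be a single pair of perpendicular conics $S$ with $S\cap H=\varnothing$; but $Y^\sigma=Y$ forces $S^\sigma=S$, contradicting Lemma \ref{perpendicularconics}. Hence $\alpha\ne 2$, and together with the facts that $\alpha$ is even and $\alpha\ge 2$, this yields $\alpha\ge 4$. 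The only point needing care is confirming that $\alpha=2$ falls within the range where \cite[Theorem 2.15]{Metsch:2016aa} applies, which it does since $2$ is the smallest possible even value.
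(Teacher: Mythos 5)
Your proof is correct and is essentially the paper's own argument: $Y$ is a tight set by Lemma~\ref{tightset}, $\alpha$ is even because the set of points of $Y$ collinear to a fixed point of $H\cap\calQ$ is $\sigma$-invariant and $\sigma$ acts fixed-point-freely on $X$ (your $Y_p$ is exactly the paper's $Y\cap\F$ with $\F$ the type-4 example; the paper gets $\F^\sigma=\F$ from Theorem~\ref{hemi}, you argue it geometrically, which is fine), and $\alpha\ne 2$ follows from the classification of $2$-tight sets, the fact that $Y\cap H=\varnothing$ excludes lines, and Lemma~\ref{perpendicularconics}. The one thing to fix is the citation in the last step: the paper invokes \cite[Theorem 1.1]{Metsch:2016aa}, which classifies $2$-tight sets with no restriction on $q$, whereas \cite[Theorem 2.15]{Metsch:2016aa} carries a ``sufficiently small'' hypothesis relating the parameter to $q$ (the paper needs $q\ge 59$ to apply it to $4$-tight sets in Theorem~\ref{characterise_type2}), and your remark that $2$ is the smallest even value does not verify that hypothesis for small $q$.
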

\begin{proof}
The set $Y$ is a tight set of $\mathsf{Q}^-(5,q)$ (by Lemma \ref{tightset}), and so
$|Y|=\alpha(q+1)$ for some positive integer $\alpha$.
Let $\F$ be an intriguing set of type 4 of size $q^2(q-1)$ as in Example \ref{examplecollineartopoint}. Then by Result \ref{bart},
\[
|Y\cap\F|=\frac{\alpha(q+1)\cdot q^2(q-1)}{q^2(q^2-1)}=\alpha.
\]
Now $Y$ and $\F$ are both $\sigma$-invariant (by Theorem \ref{hemi}) and so $Y\cap \F$ is $\sigma$-invariant. So $Y\cap \F$ is a union of $\sigma$-orbits, each of size 2. Therefore, $|Y\cap\F|=\alpha$ is even.

A $2$-tight set is either a union of two lines or two perpendicular conics (c.f., \cite[Theorem 1.1]{Metsch:2016aa}). As $Y$ is disjoint from the hyperplane $H$, we have that $Y$ contains no line. As $Y^\sigma=Y$, Lemma \ref{perpendicularconics} shows that $Y$ is not a union of two perpendicular conics. Hence $\alpha\ne 2$ and thus $\alpha\ge 4$.
\end{proof}

\begin{example}[Intriguing sets of type 2]\label{example:type2}
Recall that $X=\calQ\setminus H$. Consider a solid $S$ of $H$ such that $S\cap \calQ$ is a hyperbolic quadric $\mathsf{Q}^+(3,q)$. Then $S^\perp$ is an elliptic line of $\PG(5,q)$, that is a line of $\PG(5,q)$ with no point in $\calQ$, and $H^\perp$ is a point on this line. Consider an elliptic line $\ell_1$ of $S$. Then $\ell_2:=\ell_1^\perp\cap S$ is also an elliptic line. Let $p_1$ and $p_2$ be two perpendicular points of the line $S^\perp$ that are different from $H^\perp$ (and hence of $S^\perp\cap H$). Then $\erz{p_1,\ell_1}$ and $\erz{p_2,\ell_2}$ are perpendicular conic planes that have all their conic points outside $H$. Let $Y$ be the set consisting of the $2(q+1)$ conic points in these two planes. As the conics are perpendicular, $Y$ is a $2$-tight set of $\calQ$ (cf. Example \ref{exampleQminus5q}). The points $p_1^\sigma$ and $p_2^\sigma$ are perpendicular points of $S^\perp$ and as before, $\erz{p_1^\sigma,\ell_1}$ and $\erz{p_2^\sigma,\ell_2}$ are conic planes and the $2(q+1)$ points of their conics form a $2$-tight set $Y'$ of $\calQ$. We have $Y'=Y^\sigma$ and $Y=(Y')^\sigma$, so $Y\cup Y'$ is $\sigma$-invariant.  Notice that either $p_1^\sigma=p_2$ and $p_2^\sigma=p_1$ or otherwise the points $p_1$, $p_2$, $p_1^\sigma$ and $p_2^\sigma$ are four distinct points. In both cases, it is easy to see that no four of the used conic planes share a point of $\calQ$. Hence $Y\cap Y'=\varnothing$, which implies that $Y\cup Y'$ is a $4$-tight set of $\calQ$. As it is invariant under $\sigma$ and since $Y,Y'\subseteq X$, it follows from Theorem \ref{V2V3characterization} that $\chi_{Y\cup Y'}\in V_0+V_2$. Hence $Y\cup Y'$ is an intriguing set of $X$ size $4(q+1)$.
\end{example}

\begin{lemma}\label{meetsinhyperbolic}
Let $\calQ$ be a five-dimensional elliptic quadric of $\PG(5,q)$, and suppose $\ell_1$ and $\ell_2$ are two elliptic lines
(with respect to $\calQ$) that are disjoint and perpendicular. Then the solid $\langle \ell_1,\ell_2\rangle$ meets
$\calQ$ in a hyperbolic quadric.
\end{lemma}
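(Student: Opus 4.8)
The plan is to pass to the quadratic form underlying $\calQ$ and exploit that perpendicularity turns the spanned solid into an orthogonal direct sum of the two lines. Write $V=\mathbb{F}_q^6$ with the quadratic form $Q$ defining $\calQ$ and polar (bilinear) form $b$, and let $U_1,U_2$ be the two-dimensional subspaces corresponding to $\ell_1,\ell_2$. First I would note that two disjoint lines span a solid, so $T:=\langle\ell_1,\ell_2\rangle$ is a $\PG(3,q)$ with underlying space $W=U_1\oplus U_2$, the sum being direct because $U_1\cap U_2=0$. Perpendicularity of the lines means $b(U_1,U_2)=0$, whence $Q(u_1+u_2)=Q(u_1)+Q(u_2)$ for $u_i\in U_i$; that is, $W=U_1\perp U_2$ as quadratic spaces. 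Finally, each $\ell_i$ being elliptic means exactly that $Q|_{U_i}$ is an anisotropic binary form.

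Next I would observe that $\calQ\cap T$ is a non-singular quadric of $T\cong\PG(3,q)$, hence either $\mathsf{Q}^+(3,q)$ or $\mathsf{Q}^-(3,q)$. Over $\mathbb{F}_q$ an anisotropic binary form is non-degenerate in both parities: writing $Q|_{U_i}=\alpha x^2+\beta xy+\gamma y^2$, anisotropy forces $\beta\neq0$, which is precisely non-degeneracy of the polar form there (relevant for even $q$, where $b$ has matrix $\left(\begin{smallmatrix}0&\beta\\\beta&0\end{smallmatrix}\right)$). Since an orthogonal sum of non-degenerate spaces is non-degenerate, $Q|_W$ has rank $4$, so $\calQ\cap T$ is one of the two non-singular types.

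It remains to show the type is hyperbolic, and this is the heart of the matter. I would invoke multiplicativity of type under orthogonal sums, $\mathrm{type}(\calQ\cap T)=\mathrm{type}(\ell_1)\cdot\mathrm{type}(\ell_2)=(-)\cdot(-)=(+)$, giving $\calQ\cap T=\mathsf{Q}^+(3,q)$. Concretely, the anisotropic plane over $\mathbb{F}_q$ is unique up to isometry, say $\phi$, and $\phi\cong-\phi$ (the norm map $\mathbb{F}_{q^2}^\ast\to\mathbb{F}_q^\ast$ is onto, so multiplication by an element of norm $-1$ is an isometry $\phi\to-\phi$; for even $q$ this is vacuous), whence $Q|_W\cong\phi\perp\phi\cong\phi\perp(-\phi)$ is hyperbolic. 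Alternatively one can finish geometrically: since $T$ is non-degenerate its polarity is the restriction of $\perp$, and $\ell_2\subseteq\ell_1^\perp\cap T$ with a dimension count gives $\ell_2=\ell_1^{\perp_T}$; but in $\mathsf{Q}^-(3,q)$ the polar of an external line is a secant, contradicting that $\ell_2$ is external. The main obstacle is exactly this characteristic-free determination of type: for odd $q$ it falls out of the discriminant (which is a nonzero square), whereas for even $q$ one must appeal to additivity of the Arf invariant, or exhibit a quadric line directly, e.g.\ $\{(u,\theta u):u\in\mathbb{F}_{q^2}\}$ under the identification $U_1\cong U_2\cong\mathbb{F}_{q^2}$ with $N(\theta)=1$, using $N(u)+N(\theta u)=N(u)(1+1)=0$. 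The write-up should make explicit which standard fact is being used.
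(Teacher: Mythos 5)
Your proof is correct, but it takes a genuinely different route from the paper's. You work directly with the underlying quadratic form: the solid corresponds to the orthogonal direct sum $W=U_1\perp U_2$ of two anisotropic binary spaces, and you determine the type of $W$ by multiplicativity of type --- via the discriminant for odd $q$, and via the Arf invariant or the explicit totally singular $2$-space $\{(u,\theta u)\}$ for even $q$, using uniqueness of the anisotropic plane and $\phi\cong-\phi$. The paper instead argues geometrically inside the polar space: since $\ell_2$ is elliptic, $\ell_2^\perp$ is a hyperbolic solid; it equals $\langle \ell_1,S^\perp\rangle$, where $S=\langle\ell_1,\ell_2\rangle$, because $\ell_1$ and $S^\perp=\ell_1^\perp\cap\ell_2^\perp$ are disjoint lines of $\ell_2^\perp$; within that $\mathsf{Q}^+(3,q)$ the line $S^\perp$ is the polar image of the elliptic line $\ell_1$ and hence is itself elliptic; and then $S=(S^\perp)^\perp$ is hyperbolic, being the perp of an elliptic line with respect to $\mathsf{Q}^-(5,q)$. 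So the paper's proof is shorter because it quotes standard facts about the types of polars of elliptic lines in $\mathsf{Q}^-(5,q)$ and $\mathsf{Q}^+(3,q)$ --- facts that are themselves instances of the very multiplicativity you prove from scratch; your version buys self-containedness and an explicit, characteristic-aware treatment (your second, geometric finish via the polar of $\ell_1$ in $T$ is in fact close in spirit to the paper's argument). One small caveat: your claim that anisotropy of $\alpha x^2+\beta xy+\gamma y^2$ forces $\beta\neq 0$ holds only for even $q$ (over $\mathbb{F}_3$, say, $x^2+y^2$ is anisotropic and diagonal); since you invoke it only in the even case, and for odd $q$ non-degeneracy of an anisotropic form is immediate (a vector in the radical would be singular), this is a matter of phrasing rather than a gap, but the write-up should scope that sentence explicitly to even $q$.
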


\begin{proof}
Let $S$ be the solid spanned by $\ell_1$ and $\ell_2$. Now $\ell_2^\perp$ is equal to $\langle \ell_1, S^\perp\rangle$ because $\ell_1$ and $S^\perp$ are both contained in $\ell_2^\perp$, and $\ell_1\cap S^\perp = \ell_1\cap (\ell_1^\perp\cap\ell_2^\perp)=\varnothing$. 
Note that $\ell_2^\perp$ is a solid of hyperbolic type. 
Within the space $\ell_2^\perp$, if we consider it just as the hyperbolic quadric $\mathsf{Q}^+(3,q)$,
the line $S^\perp$ is simply the polar image of $\ell_1$, and therefore, $m$ is elliptic. Hence it follows that $S$ is hyperbolic.
\end{proof}

\begin{theorem}\label{characterise_type2}
For $q\ge 59$, every intriguing set of $Z$ with $\chi_Z\in V_0+V_2$ and $|Z|=4(q+1)$ is of the form described in Example \ref{example:type2}.
\end{theorem}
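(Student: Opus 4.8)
The plan is to combine the classification of small tight sets of $\mathsf{Q}^-(5,q)$ with a careful analysis of how the involution $\sigma$ must act on such a configuration. First I would record the basic structure of $Z$. Since $\chi_Z\in V_0\oplus V_2\subseteq V_0\oplus V_2\oplus V_3$, Lemma \ref{tightset} shows that $Z$ is a tight set of $\calQ$, and $|Z|=4(q+1)$ forces $\alpha=4$ in the notation of Lemma \ref{alpha4}, so $Z$ is a $4$-tight set. Moreover $Z^\sigma=Z$ by Theorem \ref{hemi}, and $Z\subseteq X$ is disjoint from $H$. The hypothesis $q\ge 59$ is exactly what makes the classification of tight sets of bounded size \cite[Theorem 2.15]{Metsch:2016aa} applicable for $\alpha=4$; it writes $Z$ as a disjoint union of lines and pairs of perpendicular conics. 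Every line of $\calQ$ is a line of $\PG(5,q)$ and so meets the hyperplane $H$; as $Z$ avoids $H$ it can contain no line. Since each perpendicular conic pair contributes $2(q+1)$ points, the value $\alpha=4$ forces $Z=P_1\cup P_2$, a disjoint union of exactly two perpendicular conic pairs, say $P_1=C_1\cup C_2$ and $P_2=C_3\cup C_4$ where $C_i=\pi_i\cap\calQ$ for a conic plane $\pi_i$, with $\pi_1\perp\pi_2$ and $\pi_3\perp\pi_4$.

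Next I would pin down the $\sigma$-action. The pairing into perpendicular conics is unique: if some conic plane $\pi$ were perpendicular to two of the others, those two would both equal the unique plane $\pi^\perp$, which is impossible since the conics are distinct. As $\sigma$ is a collineation stabilising $\calQ$ and commuting with $\perp$, it permutes the two pairs, and Lemma \ref{perpendicularconics} rules out $P_1^\sigma=P_1$; hence $P_1^\sigma=P_2$. Relabelling if necessary, $\pi_3=\pi_1^\sigma$ and $\pi_4=\pi_2^\sigma$.

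Finally I would recover the geometric data of Example \ref{example:type2}. Set $m_i:=\pi_i\cap H$. Because $C_i\subseteq X$ avoids $H$, each $m_i$ is an elliptic line, and since $\sigma$ fixes $H$ pointwise we get $\pi_3\cap H=m_1$ and $\pi_4\cap H=m_2$. From $\pi_2=\pi_1^\perp$ one checks that $m_1\perp m_2$ and $m_1\cap m_2=\varnothing$, so Lemma \ref{meetsinhyperbolic} shows the solid $S:=\erz{m_1,m_2}\subseteq H$ meets $\calQ$ in a hyperbolic quadric; its polar $S^\perp$ is then an elliptic line, and $H^\perp\in S^\perp$ because $S\subseteq H$, while a dimension count gives $m_2=m_1^\perp\cap S$. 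The point $p_1:=\pi_1\cap S^\perp$ is the pole of the external line $m_1$ inside the conic plane $\pi_1$, hence a single point, and likewise $p_2:=\pi_2\cap S^\perp$; these satisfy $p_1\perp p_2$ and $\pi_1=\erz{m_1,p_1}$, $\pi_2=\erz{m_2,p_2}$. If we had $p_1=H^\perp$, then $\pi_1$ would contain the centre $H^\perp$ and so be fixed by $\sigma$, contradicting $\pi_1^\sigma=\pi_3\ne\pi_1$; thus $p_1,p_2\ne H^\perp$. Since $\pi_3=\erz{m_1,p_1^\sigma}$ and $\pi_4=\erz{m_2,p_2^\sigma}$, the set $Z$ is assembled from exactly the data appearing in Example \ref{example:type2} — the hyperbolic solid $S$, the perpendicular elliptic lines $m_1$ and $m_2=m_1^\perp\cap S$, and the perpendicular points $p_1,p_2\in S^\perp\setminus\{H^\perp\}$ — which is the desired conclusion.

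The genuinely external input, and the main obstacle, is the classification of small tight sets: the whole argument rests on \cite[Theorem 2.15]{Metsch:2016aa} being usable for $\alpha=4$ precisely when $q\ge 59$, since without it one cannot even assert that $Z$ decomposes into conic pairs. The remaining work is a bookkeeping of the $\sigma$-action together with the routine polarity computations in $\PG(5,q)$ sketched above; the one point needing genuine care is the uniqueness of the perpendicular pairing, as this is what allows the $\sigma$-orbit structure to be transferred from the pairs down to the individual conic planes.
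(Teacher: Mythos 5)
Your route is the same as the paper's: invoke \cite[Theorem 2.15]{Metsch:2016aa} for $q\ge 59$, rule out lines because $Z\cap H=\varnothing$, obtain two perpendicular conic pairs, use Lemma \ref{perpendicularconics} together with $Z^\sigma=Z$ (Theorem \ref{V2V3characterization}) to force $\sigma$ to swap the two pairs, and then recover the data of Example \ref{example:type2}. Your closing bookkeeping is sound and in places slightly more complete than the paper's: for instance, your observation that $p_i\ne H^\perp$ because a plane through the centre $H^\perp$ of the central collineation $\sigma$ is $\sigma$-invariant is left implicit in the paper.

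There is, however, one under-justified step, and it is the actual crux: \emph{``As $\sigma$ is a collineation stabilising $\calQ$ and commuting with $\perp$, it permutes the two pairs.''} Those premises alone do not yield this. What is needed first is that $\sigma$ permutes the four conics themselves, i.e.\ that the conic $C_1^\sigma$ --- which is certainly a conic contained in $Z^\sigma=Z$ --- must equal one of $C_1,\dots,C_4$. A priori a conic could sit inside the union $C_1\cup C_2\cup C_3\cup C_4$ without coinciding with any of them, and excluding this requires a pigeonhole argument: two distinct conic planes meet in at most a line, so two distinct conics of $\calQ$ share at most $2$ points, whence a conic with $q+1>8$ points contained in a union of four conics must be one of them. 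This is exactly the step the paper makes explicit (``it follows from $q\ge 59$ that $\pi_1^\sigma$ must be one of the planes $\pi_2$, $\pi_3$ or $\pi_4$''); note it is a \emph{second} use of the largeness of $q$, independent of the appeal to \cite{Metsch:2016aa}. Your final paragraph locates the delicate point at the uniqueness of the perpendicular pairing; that part of your argument is fine (and is also needed), but the transfer of the $\sigma$-action from the point set $Z$ down to the set of four conics is the step that genuinely requires an argument. Once the pigeonhole is inserted there, your proof closes and is essentially identical to the paper's.
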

\begin{proof}
Let $Z$ be such an intriguing set (of type 2). Now $Z$ does not contain a line because $Z\cap H=\varnothing$. Since $q\ge 59$, it follows therefore from \cite[Theorem 2.15]{Metsch:2016aa} that $Z$ is the union of four conics $C_1$, $C_2$, $C_3$ and $C_4$ where $C_1$ and $C_2$, as well as $C_3$ and $C_4$, are perpendicular. Let $\pi_i$ be the plane spanned by $C_i$. Since $Z\subseteq X$ and hence $Z\cap H=\varnothing$, the lines $\ell_i:=\pi_i\cap H$, $i=1,2,3,4$, are elliptic lines. Since $\pi_1$ and $\pi_2$ are perpendicular and hence skew, the lines $\ell_1$ and $\ell_2$ are skew and perpendicular. Hence these two lines span a solid $S$ that meets $\calQ$ in a hyperbolic quadric $\mathsf{Q}^+(3,q)$ (by Lemma \ref{meetsinhyperbolic}). We have $S\subseteq H$ and $S^\perp$ is an elliptic line. Since $S^\perp$ and $\pi_1$ are contained in $\ell_2^\perp$, it follows that $\pi_1$ and $S^\perp$ meet in a point $p_1$, and similarly $\pi_2$ and $S^\perp$ meet in a point $p_2$. Lemma \ref{perpendicularconics} shows that $C_1\cup C_2$ is not invariant under $\sigma$, and Theorem \ref{V2V3characterization} shows that $Z^\sigma=Z$. Since $C_1\cup C_2\subset Z$, it follows that $\pi_1^\sigma\ne \pi_1$ or $\pi_2^\sigma\ne \pi_2$. We may assume without loss of generality that $\pi_1^\sigma\ne \pi_1$. Then $\pi_1^\sigma$ is a conic plane on $\ell_1$. Since the conic $\pi_1^\sigma\cap\calQ$ is contained in $Z^\sigma=Z$ and hence in the union of the conics $C_2$, $C_3$ and $C_4$, it follows from $q\ge 59$ that $\pi_1^\sigma$ must be one of the planes $\pi_2$, $\pi_3$ or $\pi_4$. Now $\ell_1\subseteq \pi_1^\sigma$, and so $\pi_1^\sigma\ne \pi_2$ and hence we may assume that $\pi_1^\sigma=\pi_3$. Therefore, $\pi_2^\sigma=(\pi_1^\perp)^\sigma=(\pi_1^\sigma)^\perp=\pi_3^\perp=\pi_4$. Thus with $Y:=C_1\cup C_2$ and $Y'=C_3\cup C_4$ we have $Y^\sigma=Y'$. So, $Z=Y\cup Y'$ has the structure described in Example \ref{example:type2}.
\end{proof}

\section{Intriguing sets that are not tight; type 4}\label{type4}

Let $Y$ be an intriguing set of the association scheme $X$ corresponding to $V_4$. Since the relation $R_3$ has the eigenvalue $\theta:=-(q-1)^2$ on $V_4$, Result \ref{bart} shows that every point of $X\setminus Y$ is collinear to
\[
\alpha:=\frac{k-\theta}{|L|}\cdot |Y|=\frac{1}{q}|Y|
\]
points of $Y$, and every point of $Y$ is collinear to exactly $\theta+\alpha$ points of $Y$. Since $\alpha+\theta\ge 0$, we have $\alpha\ge -\theta=(q-1)^2$ and hence $|Y|\ge q(q-1)^2$.

\begin{lemma}
Let $Y$ be an intriguing set of type $4$. Then $|Y|\ge q^2(q-1)$.
\end{lemma}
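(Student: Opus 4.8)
The plan is to reduce the statement to a claim about a single generator. Recall from the paragraph preceding the lemma that every point of $X\setminus Y$ is collinear to $\alpha=\tfrac1q|Y|$ points of $Y$, while every point of $Y$ is collinear to exactly $\theta+\alpha$ points of $Y$, where $\theta=-(q-1)^2$ is the eigenvalue of $R_3$ on $V_4$; here ``collinear'' and ``in relation $R_3$'' coincide on $X$, since two points $u,v\in X$ with $u\sim v$ automatically satisfy $u\not\sim v^\sigma$ (otherwise the hyperbolic line $vv^\sigma$, which contains $H^\perp$, would lie in $u^\perp$, forcing $u\in H$). As $|Y|=q\alpha$, the bound $|Y|\ge q^2(q-1)$ is equivalent to $\alpha\ge q(q-1)$. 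The positivity $\theta+\alpha\ge0$ already gives $\alpha\ge(q-1)^2$, so it suffices to improve the inner degree bound to $\theta+\alpha\ge q-1$; that is, to show that every point of $Y$ is collinear to at least $q-1$ points of $Y$.

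The key tool is a per-generator inequality. Fix a generator $\ell$ of $\calQ$ not contained in $H$; it meets $H$ in one point and contains $q$ points of $X$, and I set $y_\ell=|Y\cap\ell\cap X|$. Double counting the pairs $(z,u)$ with $z\in\ell\cap X$, $u\in Y$ and $z\sim u$ gives on one hand $q\alpha+y_\ell\theta$ (splitting $\ell\cap X$ into its points inside and outside $Y$), and on the other hand $y_\ell(q-1)+m_\ell$, where $m_\ell$ counts the points of $Y\setminus\ell$ collinear to a point of $\ell\cap X$; here I use that a point $u\notin\ell$ is collinear to at most one point of $\ell$, since otherwise $u$ together with $\ell$ would span a plane of $\calQ$. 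Using $\theta-q+1=-(q-1)q$ this yields
\[
m_\ell=q\bigl(\alpha-(q-1)y_\ell\bigr)\ge 0,
\]
so $y_\ell\le\alpha/(q-1)$ for every such generator. In particular, if $Y$ contains a \emph{full} generator, one with $y_\ell=q$, then $q\le\alpha/(q-1)$ and we are done. Thus the whole problem reduces to producing a single generator of $\calQ$ meeting $X$ only in points of $Y$.

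To construct such a full generator I would exploit the $\sigma$-invariance $Y=Y^\sigma$ (Theorem \ref{hemi}(b)) together with the elliptic geometry. For $p\in Y$ also $p^\sigma\in Y$, and the $q^2+1$ points of $H\cap\calQ$ collinear to $p$ are exactly the common neighbours of $p$ and $p^\sigma$: they form the elliptic quadric $R=(pp^\sigma)^\perp\cap\calQ\cong\mathsf{Q}^-(3,q)$ lying inside $H$ (because $H^\perp\in pp^\sigma$ forces $(pp^\sigma)^\perp\subseteq H$), and they are in bijection with the generators of $\calQ$ through $p$. Each $h\in H\cap\calQ$ determines the type-$4$ set $\F_h$ of Example \ref{examplecollineartopoint}, whose induced $R_3$-graph is a disjoint union of $q(q-1)$ cliques of size $q$, one per generator through $h$ not in $H$; the generators $ph$ and $p^\sigma h$ form a $\sigma$-invariant pair spanning a plane that meets $\calQ$ in exactly those two generators. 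I would combine these clique decompositions with the bound $y_\ell\le\alpha/(q-1)$ to argue that the generators through $p$ cannot all carry few points of $Y$, forcing one of them to be full.

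The main obstacle is precisely this last step, and the naive counting only reproduces identities. Summing the per-generator counts over the generators through $p$ gives back $\sum_{\ell\ni p}y_\ell=\alpha+2q$, and a second double count over $R$ yields the identity $\sum_{h\in R}|Y\cap\F_h|=q(q-1)(\alpha+2q)$, both without any slack; moreover $\sum_{\ell\ni p}\binom{y_\ell}{2}$ is \emph{not} determined by $\alpha$ and $\theta$ alone, so no clean second-moment bound is available. The positivity $|Y\cap\F_h|\ge0$ is likewise too weak: it does not rule out a ``coclique-like'' $Y$ carrying one point per generator, which would have $\theta+\alpha=0$. Overcoming this requires genuinely geometric input --- the absence of planes in the elliptic quadric and the rigidity of the quadric $R$ of common conjugate-neighbours --- to promote $y_\ell\le\alpha/(q-1)$ into the existence of a generator wholly contained (on $X$) in $Y$. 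This is the crux, and establishing it will simultaneously provide the structural description needed for the equality case in Theorem \ref{characterisation_type4}.
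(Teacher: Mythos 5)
Your preparatory steps are all correct: the observation that collinearity on $X$ coincides with the relation $R_3$, the per-generator inequality $y_\ell\le\alpha/(q-1)$, and the identity $\sum_{\ell\ni p}y_\ell=\alpha+2q$ all check out. But the proposal has a genuine gap at exactly the point you flag, and the gap is not a technicality --- the reduction is circular. You have proved ``full generator $\Rightarrow\alpha\ge q(q-1)$''; however, if the lemma were false for some $Y$, so that $\alpha<q(q-1)$, then your own inequality gives $y_\ell\le\alpha/(q-1)<q$ for \emph{every} generator, i.e.\ that $Y$ has no full generator. So producing a full generator for an arbitrary type-4 intriguing set is at least as hard as the lemma itself, and no contradiction can be extracted from constraints (the $R_3$-regularity of $Y$, its $\sigma$-invariance, the clique decompositions of the sets $\F_h$) that are perfectly consistent with the hypothetical low-degree configuration $\alpha+\theta=0$, in which $Y$ meets every generator at most once. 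This is precisely why your countings only reproduce identities with no slack: the obstruction you need does not live inside the graph $(X,R_3)$ at all, and for $q=4$ it is not even an eigenvalue phenomenon.

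The paper closes the argument with input from a \emph{different} eigenspace plus one external geometric fact. By Result \ref{Bart2}, the type-4 set $Y$ and the type-2 intriguing set of size $4(q+1)$ constructed in Example \ref{example:type2} intersect in exactly $4|Y|/\bigl(q^2(q-1)\bigr)$ points, so $q^2(q-1)$ must divide $4|Y|$. Combined with the eigenvalue bound $|Y|\ge q(q-1)^2$ (which you also derived), this divisibility forces $|Y|\ge q^2(q-1)$ for every $q\ge 3$ except the single case $q=4$, $|Y|=q(q-1)^2=36$; in that case $\alpha+\theta=0$, so $Y$ would be a partial ovoid of $\mathsf{Q}^-(5,4)$ of size $36$, contradicting the known maximum of $35$ \cite[Theorem 4.2]{De-Beule:2008aa}. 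In short, the missing ingredient in your approach is a cross-eigenspace intersection condition (or, for $q=4$, the partial ovoid bound), not finer counting within $Y$; your per-generator inequality, while correct, cannot supply it.
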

\begin{proof}
We have seen in Example \ref{example:type2} that there exists an intriguing set $S$ of size $4(q+1)$ with $\chi_S\in V_0\oplus V_2$. Hence
\begin{align*}
|S\cap Y|=\frac{|S|\cdot|Y|}{|X|}=\frac{4(q+1)|Y|}{q^2(q^2-1)}=\frac{4|Y|}{q^2(q-1)}.
\end{align*}
that is $q^2(q-1)$ divides $4|Y|$. We also have $|Y|\ge q(q-1)^2$. As $q\ge 3$, it follows that either $|Y|\ge q^2(q-1)$ or otherwise that $q=4$ and $|Y|=q(q-1)^2=36$. However, $|Y|=q^2(q-1)$ and $q=4$ implies that $\alpha+\theta=0$, that is $Y$ is a partial ovoid. It is known that $\mathsf{Q}^-(5,4)$ has no partial ovoids of size larger than $35$ \cite[Theorem 4.2]{De-Beule:2008aa}.
\end{proof}

We have seen in Example \ref{examplecollineartopoint} that this bound is tight. In the rest of this section we show that every intriguing set $Y$ of type 4 with $|Y|=q^2(q-1)$ has the form described in Example \ref{examplecollineartopoint}. Recall that $H$ is the hyperplane of $\PG(5,q)$ such that $X=\calQ\setminus H$, and that $H\cap \calQ$ is a parabolic quadric $\mathsf{Q}(4,q)$.

\begin{lemma}\label{properties}
Suppose that $Y$ is an intriguing set of $X$ with $q^2(q-1)$ points such that every point in $Y$ is collinear to $q-1$ points of $Y$, and such that every point of $X\setminus Y$ is collinear to $q^2-q$ points of $Y$. Then we have.
\begin{enumerate}
\renewcommand{\labelenumi}{\rm(\alph{enumi})}
\item For each $p\in H\cap \calQ$ there exists an integer $s_p$ such that $|\ell\cap Y|=s_p$ for every line of $\calQ$ that contains $p$ and is not contained in $H$. For every subset $M$ of $H\cap \calQ$ we set $s_M:=\sum_{p\in M}s_p$.
\item $s_{H\cap \calQ}=q^3+q$ and $\sum_{p\in H\cap \calQ}s_p(s_p-1)=q(q-1)$.
\item If $\ell$ is a line of $H\cap \calQ$, then $s_\ell=q$.
\item Let $U$ be a solid of $H$ such that $\calE:=U\cap \calQ$ is an elliptic quadric $\mathsf{Q}^-(3,q)$ and suppose that $s_p=0$ for at least one point $p\in\calE$. Then $s_\calE=q^2-q$.
\item Let $U$ be a solid of $H$ such that $\calH:=U\cap \calQ$ is a hyperbolic quadric $\mathsf{Q}^+(3,q)$. Then $s_\calH=q^2+q$.
\item Suppose that $q$ is odd. Let $\pi$ be a plane of $H$ such that $C:=\pi\cap \calQ$ is a conic and suppose that $s_x=0$ for at least one point $x$ of $C$. If the line $\ell:=\pi^\perp\cap H$ is external to $\calQ$, then $s_C=q-1$, and if $\ell$ meets $\calQ$ in two points $p$ and $r$, then $s_C=q+1-s_p-s_r$.
\item Suppose that $q$ is even. Let $\pi$ be a plane of $H$ such that $C:=\pi\cap \calQ$ is a conic.

    If $H^\perp\notin \pi$ and $s_p=0$ for at least one point $p$ of $C$, then $\pi^\perp\cap H$ is a line on $H^\perp$ meeting $\calQ$ in exactly one point $u$ and we have $s_C+s_u=q$.

    If $H^\perp\in\pi$, then $\pi^\perp\cap H$ is a plane that meets $\calQ$ in a conic $C'$ and we have $s_C+s_{C'}=2q$.
\end{enumerate}
\end{lemma}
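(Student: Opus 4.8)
The plan is to prove part~(a) first, since it defines the invariants $s_p$ on which every other part depends, and then to derive (b)--(g) by double counting, using repeatedly the reformulation of the hypothesis that, for any $n\in X\setminus Y$, the set $\calE_n:=n^\perp\cap H\cap\calQ$ is an ovoid $\mathsf{Q}^-(3,q)$ of $H\cap\calQ$ and $\sum_{c\in\calE_n}s_c$ equals the number of points of $Y$ collinear to $n$, namely $q^2-q$. For part~(a), fix $p\in H\cap\calQ$ and let $\ell_1,\dots,\ell_{q^2-q}$ be the lines of $\calQ$ through $p$ not contained in $H$, each meeting $X$ in its $q$ points distinct from $p$; note $p\notin Y$, and set $a_i:=|\ell_i\cap Y|$. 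The crucial geometric input is that $\calQ=\mathsf{Q}^-(5,q)$ contains no plane, so two points of $\calQ$ that are collinear to $p$ but lie on different lines through $p$ are never collinear, because their join lies in the plane $\erz{\ell_i,\ell_j}$, which meets $\calQ$ in $\ell_i\cup\ell_j$. I would then count in two ways the number $N_i$ of pairs $(x,z)$ with $x\in\ell_i\setminus\{p\}$, $z\in Y$, $z\sim x$ and $z\not\sim p$. Counting over $x$, and using that $x$ is collinear to $q-1$ points of $Y$ if $x\in Y$ and to $q^2-q$ points otherwise while the $Y$-points collinear both to $x$ and to $p$ all lie on $\ell_i$ (by the no-plane fact), gives $N_i=(q-a_i)(q^2-q)$. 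Counting over $z$ instead: for each $z\in Y$ with $z\not\sim p$ the line $\ell_i$ meets $z^\perp$ in a single point, which differs from $p$ and is collinear to $z$ (a tangent line of $\calQ$ containing a second point of $\calQ$ lies on $\calQ$); hence $N_i=|Y|-|Y\cap\F_p|$, independent of $i$. Comparing the two values forces all $a_i$ equal, and we define $s_p$ as this common value. I expect this to be the main obstacle, as it is the only place where the intriguing-set regularity must be combined with the local geometry at $p$; everything afterwards is bookkeeping.

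Parts (b), (c), (e) are then double counts. For (b) I count the flags $(p,x)$ with $p\in H\cap\calQ$, $x\in Y$, $x\sim p$: summing over $p$ gives $(q^2-q)s_{H\cap\calQ}$, and summing over $x$ gives $|Y|(q^2+1)$, because each $x\in X$ is collinear to exactly the $q^2+1$ points of the ovoid $x^\perp\cap H\cap\calQ$; hence $s_{H\cap\calQ}=q^3+q$. Counting ordered pairs of distinct collinear points of $Y$, of which there are $|Y|(q-1)$, against the unique $\calQ$-point in which their common line meets $H$, gives $(q^2-q)\sum_p s_p(s_p-1)=|Y|(q-1)$, so $\sum_p s_p(s_p-1)=q(q-1)$. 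For (c), the no-plane property shows that each $x\in X$ is collinear to exactly one point of a generator line $\ell$ of $H\cap\calQ$; counting flags $(p,x)$ with $p\in\ell$, $x\in Y$, $x\sim p$ gives $(q^2-q)s_\ell=|Y|$, whence $s_\ell=q$. For (e), the hyperbolic quadric $\calH$ is covered by its $2(q+1)$ regulus lines with every point on exactly two of them, so by~(c) we get $2s_\calH=\sum_{\ell}s_\ell=2(q+1)q$ and $s_\calH=q^2+q$.

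For (d), the line $U^\perp$ is a hyperbolic line (the polar of an elliptic solid) meeting $\calQ$ in two points $n_1,n_2$, each collinear to all of $\calE$ and satisfying $\calE_{n_i}=\calE$; a short argument with $\sigma$ (the $\sigma$-invariant line $U^\perp$ carries no $\sigma$-fixed point of $\calQ$ outside $H$, and nondegeneracy of $\calE$ excludes $H^\perp\in U$ when $q$ is even) shows $n_1,n_2\in X$. The hypothesis $s_{p_0}=0$ for some $p_0\in\calE$ means no point of $Y$ is collinear to $p_0$, and since $n_1\sim p_0$ this forces $n_1\in X\setminus Y$; hence $s_\calE=s_{\calE_{n_1}}=q^2-q$. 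Parts (f) and (g) follow the same polar idea but are the most computational. Writing $C':=\pi^\perp\cap\calQ$ for the conic polar to $C$, the hypotheses ensure that the points of $C'$ lying in $X$ are not in $Y$; for each such $n$ one has $C\subseteq\calE_n$, $s_{\calE_n}=q^2-q$, and $\bigcap_n\calE_n=\pi\cap\calQ=C$, so $s_C$ is recovered from $s_{\calE_n}$ by controlling $\sum_{c\in\calE_n\setminus C}s_c$ through an incidence count between $C'$ and the points of $H\cap\calQ$ carrying nonzero $s$-value. This produces $s_C=q-1$ when $\ell=\pi^\perp\cap H$ is external and $s_C=q+1-s_p-s_r$ when $\ell$ is secant in $p,r$ (part (f)), and $s_C+s_u=q$, respectively $s_C+s_{C'}=2q$ (part (g)); the subcase $H^\perp\in\pi$, where $C$ and $C'$ both lie in $H$ and no point of $\calQ\setminus H$ is perpendicular to $\pi$, must instead be handled inside $\mathsf{Q}(4,q)$ using~(c). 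The bookkeeping in these conic cases---tracking how $\pi^\perp\cap\calQ$ meets the polar hyperplanes of the points with $s_p\neq 0$---is the part I expect to require the most care after part~(a).
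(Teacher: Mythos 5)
Your parts (a)--(e) are correct and essentially identical to the paper's proof. Part (a) is the same double count (the paper counts the points of $Y$ whose unique neighbour on $\ell$ differs from $p$; you count the complementary pairs $(x,z)$ with $z\not\sim p$ --- both rest on the generalized-quadrangle property of $\mathsf{Q}^-(5,q)$), and (b), (c), (e) are the same flag counts. In (d) your justification that the two points of $U^\perp\cap\calQ$ lie in $X$ is garbled as written (``carries no $\sigma$-fixed point of $\calQ$ outside $H$'' is vacuous), but the ingredients you name ($\sigma$-invariance of $U^\perp$, nondegeneracy excluding $H^\perp\in U$ for $q$ even) do yield a correct argument, and the paper itself asserts this point without proof; the rest of your (d) is exactly the paper's.

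The genuine gap is in (f) and (g). Your plan is to recover $s_C$ from the relations $s_{\calE_n}=q^2-q$, $n\in C'\cap X$, via ``an incidence count between $C'$ and the points of $H\cap\calQ$ carrying nonzero $s$-value''. This cannot work as described. For $z\in(H\cap\calQ)\setminus C$, the set $z^\perp\cap\pi^\perp$ is the line $T_z^\perp$, where $T_z=\erz{\pi,z}$ is the unique solid of $H$ through $\pi$ containing $z$; hence $|z^\perp\cap C'|$ equals $2$, $1$ or $0$ according as $T_z\cap\calQ$ is elliptic, a cone, or hyperbolic. In particular, a point $z$ lying in a \emph{hyperbolic} solid section through $\pi$ is perpendicular to no point of $C'$ whatsoever, so its $s$-value is invisible to any incidence count against $C'$; and summing your relations over $n\in C'\cap X$ only reproduces $\sum_{c\in\calE_n\setminus C}s_c=q^2-q-s_C$, i.e.\ the same unknown --- the system is circular and $s_C$ is genuinely underdetermined by it. The missing idea is the pencil count the paper uses: the $q+1$ solids of $H$ through $\pi$ partition $(H\cap\calQ)\setminus C$, so by (b)
\[
\sum_{T\supseteq\pi} s_{T\cap\calQ}\;=\;s_{H\cap\calQ}+q\,s_C\;=\;q^3+q+q\,s_C,
\]
while each solid's contribution is known by type: $q^2-q$ for elliptic solids by (d) (this is where the hypothesis $s_x=0$ for some $x\in C$ enters), $q^2+q$ for hyperbolic solids by (e), and $q(q+1-s_v)$ for a cone with vertex $v$ by (c). Counting how many solids of each type occur --- governed by whether $\ell=\pi^\perp\cap H$ is external, secant or tangent, respectively by whether $H^\perp\in\pi$ --- and solving the displayed equation gives all four formulas at once (your $\calE_n$'s are exactly the elliptic members of this pencil, so your relations are a strict subset of what is needed). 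The same remark applies to your subcase $H^\perp\in\pi$ of (g): ``handled inside $\mathsf{Q}(4,q)$ using (c)'' names the right tool, but the actual mechanism is again this pencil count, now with all $q+1$ solids being cones with vertices on $C'$.
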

\begin{proof}\ \\
\noindent(a) Let $\ell$ be a line of $\calQ$ that contains $p$ and is not contained in $H$. Let $s$ be the number of points of $Y$ on $\ell$. Let $r$ be a point of $\ell$ with $r$ not equal to $p$. If $r\in Y$, then $r$ is collinear to $q-1$ points of $Y$, of which $s-1$ are on $\ell$, and if $r\notin Y$, then $r$ is collinear to $q^2-q$ points of $Y$, of which $s$ are on $\ell$. Hence there exist exactly $s(q-s)+(q-s)(q^2-q-s)=(q-s)(q^2-q)$ points in $Y$ that are collinear to exactly one of the points $\ne p$ of $\ell$. The remaining points of $Y$ must therefore be collinear to $p$. This shows that $s$ depends only on $p$ but not on the choice of $\ell$.\\

\noindent(b) Each point $p\in H\cap \calQ$ is collinear to $(q^2-q)s_p$ points of $Y$. On the other hand, every point of $Y$ is collinear to $q^2+1$ points of $H\cap \calQ$. A double counting argument thus gives $\sum_{p\in H\cap \calQ}s_p(q^2-q)=|Y|(q^2+1)$. Since $|Y|=q^2(q-1)$, this gives $\sum_{p\in H\cap \calQ}s_p=q^3+q$.

For the second equality in (b) we count triples $(p,r_1,r_2)\in (H\cap \calQ)\times Y\times Y$ of collinear points with $r_1\ne r_2$. Each point $r_1\in Y$ is collinear to $q-1$ points of $Y$ and thus occurs in $q-1$ such triples in the middle position. Each point $p\in H\cap \calQ$ occurs in $(q^2-q)s_p(s_p-1)$ such triples. It follows that
\begin{align*}
\sum_{p\in H\cap \calQ}s_p(s_p-1)(q^2-q)=|Y|(q-1).
\end{align*}
Since $|Y|=q^2(q-1)$, this implies the second equation of statement (b).\\

\noindent(c) A point $p$ of $\ell$ is collinear to $s_p(q^2-q)$ points of $Y$. Since every point of $Y$ is collinear to exactly one point of $\ell$, it follows that $s_\ell(q^2-q)=|Y|$, hence $s_\ell=q$.\\

\noindent(d) The line $U^\perp$ meets $\calQ$ in two points $r$ and $r'$ and these points do not lie in $H$. Since $s_p=0$ for some point of $\calE$, we see that $r,r'\notin Y$. Thus $r$ is collinear to exactly $q^2-q$ points of $Y$ and these are exactly the points of $Y$ on the lines $rz$ with $z\in\calE$. Since $r\notin Y$, it follows that $s_\calE=q^2-q$.\\

\noindent(e) Because a hyperbolic quadric is the union of the $q+1$ lines of any of its two reguli, this follows from part (c).\\

\noindent(f) As $q$ is odd, $\pi^\perp\cap H$ is in fact a line and moreover this line is skew to $\pi$. First consider the case that $\ell$ has no point on the quadric. Then each solid $T$ of $H$ on $\pi$ meets $\calQ$ in an elliptic quadric $\mathsf{Q}^-(3,q)$ or a hyperbolic quadric $\mathsf{Q}^+(3,q)$, and there are $\frac12(q+1)$ solids of each kind. Counting the sum $\sum_{p\in H\cap \calQ}s_p$ using the solids on $\pi$, it follows from (b), (d) and (e) that \begin{align*}
q^3+q+qs_C=\frac12(q+1)(q^2-q)+\frac12(q+1)(q^2+q)
\end{align*}
This implies that $s_C=q-1$.

Now consider the case that the $\pi^\perp\cap H$ is a secant line to the quadric, and let $p$ and $r$ be the two points of $\calQ$ on this line. The solids $\erz{\pi,p}$ and $\erz{\pi,r}$ meet $\calQ$ in a cone $C_p$ resp. $C_r$ with vertex $p$ resp. $r$ over the conic $C$. From the remaining $q-1$ solids of $H$ through $\pi$ one half meets $\calQ$ in an elliptic quadric $\mathsf{Q}^-(3,q)$ and one half meets $\calQ$ in a hyperbolic quadric $\mathsf{Q}^+(3,q)$.
It follows from (c) that $s_{C_p}=q(q+1-s_p)$ and $s_{C_r}=q(q+1-s_r)$. Then a similar counting as in the first case gives
\begin{align*}
q^3+q+qs_C=q(q+1-s_p)+q(q+1-s_r)+\frac12(q-1)(q^2-q)+\frac12(q-1)(q^2+q).
\end{align*}
This implies that $s_C=q+1-s_p-s_r$.\\

\noindent(g) First consider the case when $\pi$ does not contain $H^\perp$. Then $\erz{\pi,H^\perp}$ meets $\calQ$ in a cone $U$ with vertex a point $u$ over the conic $C$. The vertex $u$ is the unique point on the line joining $H^\perp$ to the nucleus of the conic in $\pi$. Part (c) shows that $s_U=q(q+1-s_u)$. Every other solid on $\pi$ meets $\calQ$ in a $\mathsf{Q}^-(3,q)$ or $\mathsf{Q}^+(3,q)$ and there are $q/2$ solids of each type. If $C$ has a point $p$ with $s_p=0$, then the previous parts imply similarly as in the proof of (f) that
\begin{align*}
q^3+q+qs_C&=q(q+1-s_u)+\frac q2(q^2-q)+\frac q2(q^2+q)
\end{align*}
and this gives $s_C+s_u=q$.

Now consider the case when $\pi$ contains $H^\perp$. Then $\tau:=\pi^\perp\cap H$ is a conic plane with $\pi\cap\tau=H^\perp$. Let $C'$ be the conic $\tau\cap\calQ$. The solids on $\pi$ are spanned by $\pi$ and a point of $C'$. Hence all these solids meet $\calQ$ in cones with vertex a point of $C'$. If $u$ is a point of $C'$, then the solid $T:=\erz{\pi,u}$ meets $\calQ$ in the union of $q+1$ lines on $u$, so part (c) shows that $s_T=q(q+1-s_u)$. It follows that
\begin{align*}
q^3+q+qs_C&=\sum_{u\in C'}q(q+1-s_u)=q(q+1)^2-qs_{C'},
\end{align*}
which gives $s_C+s_{C'}=2q$.
\end{proof}

\begin{lemma}\label{isthisused1}
Suppose that $Y$ is an intriguing set of $X$ with $q^2(q-1)$ points such that every point in $Y$ is collinear to $q-1$ points of $Y$, and every point of $X\setminus Y$ is collinear to $q^2-q$ points of $Y$.
Let $\ell$ be a line of $H$ that meets $\calQ$ in exactly two points $p$ and $r$, and let $C$ be the conic $\ell^\perp\cap H\cap \calQ$. Suppose that $s_p=0$. Then $s_C=q(2-s_r)$.
\end{lemma}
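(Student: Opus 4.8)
The plan is to work with the elliptic quadric $\mathcal{E}:=\ell^\perp\cap\calQ$ cut out on the solid $\ell^\perp$, whose equatorial section in $H$ is precisely $C$, and to double count the incident pairs $(z,y)$ with $z\in Y$, $y\in\mathcal{E}$ and $z\sim y$. First I would record the relevant geometry: as $\ell$ is a secant line, $\ell^\perp$ is a solid and $\mathcal{E}=\ell^\perp\cap\calQ$ is an elliptic quadric $\mathsf{Q}^-(3,q)$ with $q^2+1$ points, meeting $H$ in $C=\mathcal{E}\cap H$ (so $|C|=q+1$ and $\mathcal{E}\setminus C\subseteq X$ has $q^2-q$ points). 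Since $p\in\ell$ we have $\mathcal{E}\subseteq p^\perp$; and because $s_p=0$, no point of $Y$ is collinear with $p$, so $Y\cap p^\perp=\varnothing$. In particular $Y\cap\mathcal{E}=\varnothing$, and every point of $\mathcal{E}\setminus C$ lies in $X\setminus Y$.

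Next I would evaluate $N:=|\{(z,y)\colon z\in Y,\ y\in\mathcal{E},\ z\sim y\}|$ by summing over $y$. Each $y\in C\subseteq H\cap\calQ$ is collinear to $(q^2-q)s_y$ points of $Y$ by Lemma \ref{properties}(b), contributing $(q^2-q)s_C$ in total; each $y\in\mathcal{E}\setminus C$ lies in $X\setminus Y$ and so is collinear to exactly $q^2-q$ points of $Y$ by hypothesis, contributing $(q^2-q)^2$. Hence $N=(q^2-q)s_C+(q^2-q)^2$.

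The crux is to recompute $N$ by summing over $z\in Y$, which amounts to determining $|z^\perp\cap\mathcal{E}|$ for each $z\in Y$. As $z\not\sim p$ we have $z\notin\ell^\perp$, so $\langle z,p,r\rangle^\perp=z^\perp\cap\ell^\perp$ is a plane inside $\ell^\perp$, and the common neighbours of $z,p,r$ form the plane section $(z^\perp\cap\ell^\perp)\cap\mathcal{E}$, which is either a conic of $q+1$ points or a single point. To decide which, I would use that for the nondegenerate quadratic form defining $\calQ$ a plane $K$ meets $\calQ$ in a nondegenerate conic if and only if its polar plane $K^\perp$ does. The plane $K=\langle z,p,r\rangle$ meets $\calQ$ in a nondegenerate conic exactly when no two of $z,p,r$ are collinear; since $z\not\sim p$ (because $z\in Y$) and $p\not\sim r$ (as $\ell$ is a secant), this holds precisely when $z\not\sim r$. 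Thus $|z^\perp\cap\mathcal{E}|=q+1$ when $z\not\sim r$; and when $z\sim r$ the polar plane is tangent to $\mathcal{E}$, which, since $\mathcal{E}$ contains no lines, forces the section to be a single point, so $|z^\perp\cap\mathcal{E}|=1$. Verifying this dichotomy uniformly in $q$ — in particular that the degenerate section is exactly one point — is the step I expect to demand the most care, the characteristic-two case needing the quadratic-form (rather than bilinear-form) version of the polarity argument.

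Finally, since exactly $(q^2-q)s_r$ points of $Y$ are collinear with $r$ (Lemma \ref{properties}(b) again), summing the values above gives
\[
N=\bigl(|Y|-(q^2-q)s_r\bigr)(q+1)+(q^2-q)s_r=|Y|(q+1)-q(q^2-q)s_r .
\]
Equating the two expressions for $N$, substituting $|Y|=q^2(q-1)$, and dividing through by $q^2-q$ yields $s_C+(q^2-q)=q(q+1)-qs_r$, that is $s_C=q(2-s_r)$, as claimed.
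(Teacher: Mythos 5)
Your proof is correct, but it takes a genuinely different route from the paper's. The paper evaluates $\sum_{w\in H\cap\calQ}s_w$ by running over the pencil of planes of $H$ through $\ell$, which forces it to invoke Lemma \ref{properties}(f)/(g) and to split into the cases $q$ odd and $q$ even (the pencil behaves differently because of the nucleus $H^\perp$ in even characteristic). Your double count of collinear pairs $(z,y)\in Y\times\mathcal{E}$ against the elliptic quadric $\mathcal{E}=\ell^\perp\cap\calQ$ avoids parts (f) and (g) entirely, uses only the hypothesis together with the fact (implicit in the proof of Lemma \ref{properties}(b)) that each $w\in H\cap\calQ$ is collinear to $(q^2-q)s_w$ points of $Y$, and is uniform in the parity of $q$ --- a real simplification. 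The one step you flag, the tangency dichotomy, does hold in both characteristics: in odd characteristic the radical of $Q|_U$ is $U\cap U^\perp=(U^\perp)\cap(U^\perp)^\perp$, and in characteristic $2$ the singular radicals of $Q|_U$ and $Q|_{U^\perp}$ are literally the same set $\{v\in U\cap U^\perp : Q(v)=0\}$, so one is trivial if and only if the other is. You can also bypass the duality principle altogether: when $z\sim r$, the line $zr$ is a generator of $\mathsf{Q}^-(5,q)$, so $(zr)^\perp\cap\calQ=zr$ and hence $z^\perp\cap\mathcal{E}=(zr)^\perp\cap p^\perp\cap\calQ=zr\cap p^\perp$ is a single point; when $z\not\sim r$, if $z^\perp\cap\ell^\perp$ were tangent to $\mathcal{E}$ at $v$, taking polars gives $\langle z,p,r\rangle=\langle v,p,r\rangle$, so $v$ would be a point of the nondegenerate conic $\langle z,p,r\rangle\cap\calQ$ perpendicular to that whole plane, putting a line inside the conic --- a contradiction. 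One last remark: the paper's odd-$q$ argument establishes the sharper facts $s_r=1$ and $s_C=q$ along the way, which your argument does not; but only the stated identity $s_C=q(2-s_r)$ is used later (in Lemma \ref{isthisused2}), so nothing is lost.
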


\begin{proof}
Put $C=\{z_0,\dots,z_q\}$ and $\pi:=\ell^\perp\cap H$. Then $C=\pi\cap\calQ$.

\noindent\underline{Case 1. We assume that $q$ is odd.}\\
The planes of $H$ on $\ell$ are the planes $\tau$ spanned by $\ell$ and a point $z$ of $\pi$. For $z=z_i\in C$, the plane $\tau:=\erz{\ell,z_i}$ meets $\calQ$ in the union of the lines $z_ip$ and $z_ir$, so we have $s_\tau=2q-s_{z_i}$. Now consider one of the $q^2$ points $z$ of $\pi\setminus C$ and the plane $\tau_z:=\erz{\ell,z}$. Then $\tau_z$ meets $\calQ$ in a conic. The line $\tau_z^\perp\cap H=z^\perp\cap\pi$ is either an exterior line or a secant line of the conic $\pi\cap\calQ$, depending on whether or not $z$ is an interior or exterior point of the conic $C$. Thus, exactly ${q\choose 2}$ planes $\tau_z$ have the property that $\tau_z^\perp\cap H$ is an exterior line of the quadric and these planes satisfy $s_{\tau_z}=q-1$ by Lemma \ref{properties} (f). The remaining ${q+1\choose 2}$ planes $\tau_z$ of $H$ on $\ell$ can be indexed $\tau_{ij}$ with $i<j$ where $z_i$ and $z_j$ are the two points of $C$ that lie on the line $\erz{z,\ell}^\perp\cap\pi$. Then $s_{\tau_{ij}}=q+1-s_{z_i}-s_{z_j}$ again by Lemma \ref{properties} (f). Counting the sum $\sum_{p\in H\cap \calQ}s_p$ by considering the planes of $H$ on $\ell$ we thus find
\begin{align*}
q^3+q+(q^2+q)s_\ell&=\sum_{i=0}^q(2q-s_{z_i})+{q\choose 2}(q-1)+\sum_{i,j=0\atop i<j}^q(q+1-s_{z_i}-s_{z_j}).
\end{align*}
For each $i$ the term $s_{z_i}$ occurs $q+1$ times in this equation. It follows thus that
\begin{align*}
q^3+q+(q^2+q)s_\ell&=(q+1)\cdot 2q+{q\choose 2}(q-1)+{q+1\choose 2}(q+1)-(q+1)\sum_{i=0}^qs_{z_i}.
\end{align*}
As $s_C=\sum_is_{z_i}$ this gives $qs_\ell=2q-s_C$.
By Lemma \ref{properties}(f), we have
\[
q s_C=q(q+1-s_\ell)=q^2+q-(2q-s_C)=q^2-q+s_C
\]
and hence $s_C=q$ and $s_\ell=1$. Therefore, $s_r=s_p+s_r=s_\ell=1$ and $s_C=q(2-s_r)$.

\medskip

\noindent\underline{Case 2. We assume that $q$ is even.}\\
As before the planes of $H$ on $\ell$ are the planes spanned by $\ell$ and a point of $\pi$. Let $z$ be a point of $\pi$ and $\tau=\erz{\ell,z}$. If $z=z_i\in C$, then  $s_\tau=2q-s_{z_i}$ as in Case 1. If $z=H^\perp$, then $\tau^\perp\cap H=\pi$, so $s_\tau=2q-s_\pi$ by Lemma \ref{properties} (g). If $z$ is one of the $q^2-1$ points $z$ with $z\ne H^\perp$ and $z\notin C$, then the line on $H^\perp$ and $z$ meets $C$ in a unique point $z_i$ and Lemma \ref{properties} (g) shows $s_\tau=q-s_{z_i}$. Notice that every point $z_i$ of $C$ occurs for exactly $q-1$ choices for $z$ of $\pi$ with $z\ne H^\perp$ and $z\notin C$. Counting the sum $\sum_{p\in H\cap \calQ}s_p$ by considering the planes of $H$ on $\ell$ we thus find
\begin{align*}
q^3+q+(q^2+q)s_\ell&=\sum_i(2q-s_{z_i})+(2q-s_C)+(q-1)\sum_i(q-s_{z_i}).
\end{align*}
Using $s_C=\sum_is_{z_i}$, this simplifies to $(q^2+q)s_\ell=2q(q+1)-(q+1)s_{C}$.
Since $s_\ell=s_p+s_r=s_r$, this proves the statement.
\end{proof}

\begin{lemma}\label{isthisused2}
Suppose that $q$ is odd and that $Y$ is an intriguing set with $q^2(q-1)$ points such that every point in $Y$ is collinear to $q-1$ points of $Y$, and every point of $X\setminus Y$ is collinear to $q^2-q$ points of $Y$. Then $s_r\in\{0,1,q\}$ for every point $r\in H\cap\calQ$.
\end{lemma}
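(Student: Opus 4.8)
The plan is to combine a global double count over $H\cap\calQ$ with a sharpened local consequence of Lemma \ref{isthisused1}. Fix a point $r\in H\cap\calQ$ and write $t:=s_r$; recall $t\le q$ since a generator through $r$ not lying in $H$ meets $X$ in only $q$ points. Summing Lemma \ref{properties}(c) over the $q+1$ lines of $\mathsf{Q}(4,q)$ through $r$ (which, since $\mathsf{Q}(4,q)$ is a generalized quadrangle, partition the $q^2+q$ points of $H\cap\calQ$ collinear to $r$) gives $\sum_{x\sim r}s_x=(q+1)(q-t)$. Subtracting this and $s_r=t$ from the total $\sum_{x\in H\cap\calQ}s_x=q^3+q$ of Lemma \ref{properties}(b) yields
\[
\sum_{x\text{ non-collinear to }r}s_x=q^3-q^2+qt,
\]
a sum ranging over exactly $q^3$ points (the number of points of $\mathsf{Q}(4,q)$ not collinear to $r$).

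The second ingredient is the following sharpening: \emph{if $\ell$ is a secant line of $H$ meeting $\calQ$ in two points $z,r$ with $s_z=0$, then $s_r\in\{0,1\}$.} Such a line is exactly the join of two non-collinear points of $H\cap\calQ$. Lemma \ref{isthisused1} gives $s_C=q(2-s_r)$ for the conic $C=\ell^\perp\cap H\cap\calQ$, so $s_C\ge 0$ already forces $s_r\le 2$. To exclude $s_r=2$, note that then $s_C=0$, so \emph{every} point of $C$ has $s$-value $0$; in particular $C$ contains a point with $s$-value $0$, which is precisely the hypothesis needed to invoke Lemma \ref{properties}(f). Since $q$ is odd we have $H^\perp\notin H$, and a short polarity computation (using $(\ell^\perp\cap H)^\perp=\erz{\ell,H^\perp}$) shows $\pi^\perp\cap H=\ell$ for $\pi:=\ell^\perp\cap H$; thus the relevant line in Lemma \ref{properties}(f) is the secant $\ell$ itself, and its secant case gives $s_C=q+1-s_z-s_r=q-1$. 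This contradicts $s_C=0$ for $q\ge 3$, so $s_r\ne 2$ and hence $s_r\in\{0,1\}$.

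With both facts in hand the conclusion follows quickly. Suppose $t\ge 2$. Then no point $x$ non-collinear to $r$ can satisfy $s_x=0$: otherwise the secant $xr$ and the sharpening above would force $t=s_r\in\{0,1\}$, a contradiction. Hence each of the $q^3$ points non-collinear to $r$ has $s_x\ge 1$, so $q^3\le\sum_{x\text{ non-collinear}}s_x=q^3-q^2+qt$, giving $q\le t$. Combined with $t\le q$ this yields $t=q$. Therefore $t\in\{0,1,q\}$, which is the assertion. (The values $t\in\{0,1\}$ need no argument, so the whole statement reduces to showing $t\ge 2\Rightarrow t=q$.)

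I expect the main obstacle to be the sharpening, and specifically the exclusion of $s_r=2$. The stated conclusion of Lemma \ref{isthisused1} only delivers $s_C=q(2-s_r)\ge0$, hence $s_r\le 2$, and it is tempting but incorrect to conclude $s_r=1$ directly. The value $s_r=2$ is eliminated not by counting but by re-evaluating $s_C$ through Lemma \ref{properties}(f), whose zero-entry hypothesis becomes available exactly because $s_C=0$ puts a point of $s$-value $0$ on the conic $C$. Once $s_r\in\{0,1\}$ is secured along secant lines emanating from any zero-point, the global count closes the argument with no further case distinction.
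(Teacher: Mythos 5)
Your proof is correct, but it follows a genuinely different route from the paper's. The paper argues locally and constructively: assuming $0<s_r<q$, it applies Lemma \ref{properties}(c) to a line of $H\cap\calQ$ through $r$ to find a point $z\ne r$ with $s_z>0$, then to a second line through $z$ to find a point $p$ with $s_p=0$; since $\mathsf{Q}(4,q)$ is a generalized quadrangle (hence triangle-free), $p$ is not collinear to $r$, and $z$ lies on the conic $C=(pr)^\perp\cap H\cap\calQ$, so Lemma \ref{isthisused1} gives $q(2-s_r)=s_C\ge s_z>0$ and thus $s_r=1$. Because positivity of $s_C$ comes for free from $z$, the paper never has to confront the value $s_r=2$ and never re-invokes Lemma \ref{properties}(f). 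You instead run a global count, $\sum_{x\not\sim r}s_x=q^3-q^2+qs_r$ over the $q^3$ points of $H\cap\calQ$ not collinear to $r$, and a dichotomy: either some point with $s$-value $0$ is non-collinear to $r$, in which case your sharpened form of Lemma \ref{isthisused1} gives $s_r\le 1$, or there is none, in which case the count forces $s_r\ge q$, hence $s_r=q$. The price of your route is exactly the sharpening: the stated conclusion $s_C=q(2-s_r)\ge 0$ only yields $s_r\le 2$, and your exclusion of $s_r=2$ is the delicate step — it is correct as written, since $s_C=0$ makes every point of $C$ a zero point, which licenses Lemma \ref{properties}(f) applied to $\pi=\ell^\perp\cap H$ (with $\pi^\perp\cap H=\ell$, a valid computation for $q$ odd as $H^\perp\notin H$), producing the contradiction $0=s_C=q-1$. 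It is worth noting that this sharpening (a secant of $H$ with one endpoint of $s$-value $0$ has other endpoint of $s$-value at most $1$) is precisely a refinement that the paper's own proof of Lemma \ref{isthisused1}, Case 1, derives internally for $q$ odd (where it obtains $s_\ell=1$) but does not record in the lemma's statement, which is why you had to re-derive it. As for what each approach buys: the paper's is shorter and avoids any second appeal to (f); yours avoids the constructive, triangle-free argument, isolates a reusable standalone fact, and makes transparent that $s_r=q$ is exactly the situation in which every point of $\mathsf{Q}(4,q)$ off $r^\perp$ has positive $s$-value — which is the configuration that survives into Theorem \ref{smalltype4}.
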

\begin{proof}
 Since $\sum_{w\in H\cap \calQ}s_w(s_w-1)=q(q-1)$, we have $s_r\le q$. We may assume that $0<s_r<q$. Let $\ell$ be a line of $H\cap\calQ$ on $r$. It follows from Lemma \ref{properties} (c) that $\ell$ contains a point $z$ with $z\ne r$ and $s_z>0$. Let $h$ be a line of $H\cap \calQ$ on $z$ with $h\ne \ell$. Lemma \ref{properties} (c) shows that $h$ contains a point $p$ with $s_p=0$. Then $pr$ is a secant line
 and $z$ is a point of the conic $C:=(pr)^\perp\cap H\cap \calQ$. Hence $s_C\ge s_z>0$. Lemma \ref{isthisused1} shows that $s_C=q(2-s_r)$. Hence $s_r<2$, that is $s_r=1$.
\end{proof}

\begin{theorem}\label{smalltype4}
Suppose $Y$ is an intriguing set of the Pentilla-Williford scheme $(X,\{R_0,\dots,R_4\})$ with $|Y|=q^2(q-1)$ such that every point in $Y$ is collinear to $q-1$ points of $Y$, and every point of $X\setminus Y$ is collinear to $q^2-q$ points of $Y$. Then there exists a point $p\in H\cap \calQ$ such that $Y$ consists of the $(q^2-q)q$ points of $X=\calQ\setminus H$ that are collinear to $p$.
\end{theorem}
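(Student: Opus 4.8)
The plan is to work entirely with the line-intersection numbers $s_p$ supplied by Lemma \ref{properties}(a): for each $p\in H\cap\calQ$, every generator of $\calQ$ through $p$ not lying in $H$ meets $Y$ in the same number $s_p$ of points. The whole argument rests on the two global identities of Lemma \ref{properties}(b), namely $\sum_{p} s_p=q^3+q$ and $\sum_{p}s_p(s_p-1)=q(q-1)$. My target is to prove the sharp trichotomy $s_p\in\{0,1,q\}$ for every $p\in H\cap\calQ$; once this is known, the second identity collapses at once, since only the value $q$ contributes to $\sum_p s_p(s_p-1)$ and each such point contributes exactly $q(q-1)$. Hence there is precisely one point $p_0$ with $s_{p_0}=q$, while all remaining $s_p$ are $0$ or $1$.

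First I would establish the trichotomy $s_p\in\{0,1,q\}$. For odd $q$ this is exactly Lemma \ref{isthisused2}, so nothing further is needed. For even $q$ I would rerun the same argument: assuming $0<s_r<q$, Lemma \ref{properties}(c) (which gives $s_h=q$ for every line $h$ of $H\cap\calQ$) yields a point $z\ne r$ with $s_z>0$ on a line $\ell\subseteq H\cap\calQ$ through $r$, and then a point $p$ with $s_p=0$ on a second line $h\ne\ell$ of $H\cap\calQ$ through $z$. The plane $\erz{\ell,h}$ meets $\calQ$ in the line pair $\ell\cup h$, so the line $pr$ is a secant meeting $\calQ$ in exactly $p$ and $r$; moreover $z\in p^\perp\cap r^\perp=(pr)^\perp$, so $z$ lies on the conic $C:=(pr)^\perp\cap H\cap\calQ$ and therefore $s_C\ge s_z>0$. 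Since $s_p=0$, Lemma \ref{isthisused1} (which carries no parity restriction) gives $s_C=q(2-s_r)$, and combining this with $s_C>0$ forces $s_r=1$. This is the step I expect to be the main obstacle: for even $q$ one must verify that all of these incidences behave correctly in the presence of $H^\perp\in H$ — in particular that $pr$ is genuinely a secant and that $(pr)^\perp\cap H\cap\calQ$ is a nondegenerate conic even though the plane $(pr)^\perp\cap H$ passes through $H^\perp$ — which is exactly the configuration already accounted for in the even cases of Lemma \ref{isthisused1} and Lemma \ref{properties}(g).

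It then remains to recover the geometry from the single point $p_0$ with $s_{p_0}=q$. A generator $\ell$ of $\calQ$ through $p_0$ not contained in $H$ meets $H$ only in $p_0$, so its $q$ points other than $p_0$ all lie in $X$, and $s_{p_0}=q$ says that all $q$ of them lie in $Y$. There are $q^2+1$ generators of $\calQ$ through $p_0$, of which the $q+1$ lying in $H$ are exactly the lines of $H\cap\calQ$ through $p_0$; hence $q^2-q$ of them avoid $H$. Since two generators through $p_0$ meet only in $p_0\in H$, the resulting $(q^2-q)q=q^2(q-1)$ points of $Y$ are distinct, and each is collinear to $p_0$. As $|Y|=q^2(q-1)$, the set $Y$ consists of precisely these points, which are exactly the points of $X=\calQ\setminus H$ collinear to $p_0$ described in Example \ref{examplecollineartopoint}, completing the proof.
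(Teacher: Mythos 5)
Your proposal is correct and follows essentially the same route as the paper's own proof: the counting machinery of Lemma \ref{properties}, the trichotomy $s_p\in\{0,1,q\}$, the identity $\sum_p s_p(s_p-1)=q(q-1)$ forcing a unique point with $s_p=q$, and the same final geometric recovery of $Y$ as the set of points of $X$ collinear to that point. The one respect in which you go beyond the paper is genuinely valuable: the paper's proof of Theorem \ref{smalltype4} quotes the trichotomy for all $q$, but Lemma \ref{isthisused2} is stated only for $q$ odd, so your explicit rerun of its argument for $q$ even closes a real gap in the write-up rather than duplicating it. Your patch is sound, including the step you flag as the main obstacle: for $q$ even the set $C=(pr)^\perp\cap H\cap\calQ$ is indeed a conic, since (i) $C$ cannot contain a line of $\calQ$ (such a line lies in $p^\perp$, so together with $p$ it would span a singular plane, impossible in the rank-two polar space $\mathsf{Q}^-(5,q)$), and (ii) $C$ cannot be a single point, because tangency of the plane $(pr)^\perp\cap H$ to the elliptic quadric $(pr)^\perp\cap\calQ$ would require a point of $\calQ$ in $\erz{pr,H^\perp}\cap (pr)^\perp$, and any such point lies on $p^\perp\cap\erz{pr,H^\perp}=pH^\perp$ and on $r^\perp\cap\erz{pr,H^\perp}=rH^\perp$, hence equals $H^\perp\notin\calQ$. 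Equivalently, as you suggest, the second half of Lemma \ref{properties}(g) applied to the plane $\erz{pr,H^\perp}$ yields the same conclusion, so your even-$q$ extension of Lemma \ref{isthisused2} goes through and the theorem holds for all $q>2$ exactly as you argue.
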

\begin{proof}
Since $s_p\in\{0,1,q\}$ for all points $p\in H\cap \calQ$, Lemma \ref{properties} (b) implies that there exists a unique point $p$ with $s_p=q$. Then each of the $q^2-q$ lines of $\calQ$ on $p$ that is not contained in $H$ meets $Y$ in $q$ points. Hence, $Y$ consists of the points of $X$ that are collinear to $p$.
\end{proof}


\end{document}